\newcommand{\thecase}[1]{\noindent\textsc{Case #1:}\ }
\newcommand{\setof}[2]{\left\{ #1 \;\middle|\; #2 \right\}}
\newcommand{\n}{n}
\newtheorem{theor}{Theorem}[section]
\newtheorem{propo}[theor]{Proposition}
\newtheorem{lemma}[theor]{Lemma}
\newtheorem{corol}[theor]{Corollary}
\theoremstyle{defin}
\newtheorem{defin}[theor]{Definition}
\theoremstyle{remark}
\newtheorem{remar}[theor]{Remark}
\numberwithin{equation}{section}
\newcommand{\nn}{\underline{n}}
\newcommand{\mm}{\underline{m}}
\newcommand{\K}{\mathbb{K}}
\newcommand{\CC}{\mathbb{C}}
\newcommand{\NN}{\mathbb{N}}
\newcommand{\ZZ}{\mathbb{Z}}
\newcommand{\IIIm}{\mbox{\texttt{\textup{(I-)}}}}
\newcommand{\IIIp}{\mbox{\texttt{\textup{(I+)}}}}
\newcommand{\OOO}{\mbox{\texttt{\textup{(O)}}}}
\newcommand{\CCC}{\mbox{\texttt{\textup{(C)}}}}
\newcommand{\CCCp}{\mbox{\texttt{\textup{(C+)}}}}
\newcommand{\RRR}{\mbox{\texttt{\textup{(R)}}}}
\newcommand{\RRRp}{\mbox{\texttt{\textup{(R+)}}}}
\newcommand{\PPP}{\mbox{\texttt{\textup{(P)}}}}
\newcommand{\PPPp}{\mbox{\texttt{\textup{(P+)}}}}
\newcommand{\calP}{\ensuremath{\mathcal{P}}\xspace}
\newcommand{\MPZccc}[1][\mathbf{m}\times\mathbf{n}]{\ensuremath{\mathfrak{M}^{\circ\circ\circ}_\calP(#1,\ZZ)}\xspace}
\newcommand{\MPZc}[1][\mathbf{m}\times\mathbf{n}]{\ensuremath{\mathfrak{M}^\circ_\calP(#1,\ZZ)}\xspace}
\newcommand{\MPZcc}[1][\mathbf{m}\times\mathbf{n}]{\ensuremath{\mathfrak{M}^{\circ\circ}_\calP(#1,\ZZ)}\xspace}
\newcommand{\SL}{\operatorname{SL}_\Gamma}
\newcommand{\GL}{\operatorname{GL}_\Gamma}
\newcommand{\SLp}{\operatorname{SL}^+_\Gamma}
\newcommand{\GLp}{\operatorname{GL}^+_\Gamma}
\newcommand{\MG}[1][]{\mathcal M_{\Gamma_{#1}}}
\renewcommand{\AA}{\mathsf A}
\newcommand{\OO}{\underline 0}
\newcommand{\EE}{\underline 1}
\newcommand{\BB}{\mathsf B}
\newcommand{\XX}{\mathsf X}
\renewcommand{\CC}{\mathsf C}\newcommand{\DD}{\mathsf D}
\newcommand{\DBpair}{(\DD,\BB)}
\newcommand{\DBpairprime}{(\DD',\BB')}
\newcommand{\DBpairprimeprime}{(\DD'',\BB'')}
\newcommand{\cok}{\operatorname{cok}}
\newcommand{\im}{\operatorname{im}}
\newcommand{\myop}{\mathsf{E}_{ij}}
\newcommand{\mypair}[2]{\left(#1,#2\right)}
\definecolor{lgray}{rgb}{0.9,0.9,0.9}
\newcommand{\vt}[1]{\mbox{\fbox{$#1$}}}
\date{\today}
\begin{document}
\begin{abstract}
We geometrically describe the relation induced on a set of graphs by isomorphism of their associated graph $C^*$-algebras as the smallest equivalence relation generated by five types of moves.
The graphs studied have
finitely many vertices and finitely or countably infinitely many edges, corresponding to unital and separable $C^*$-algebras.
\end{abstract}

	\author{Sara E.\ Arklint}
        \address{Department of Mathematical Sciences \\
        University of Copenhagen\\
        Universitetsparken~5 \\
        DK-2100 Copenhagen, Denmark}
        \email{arklint@math.ku.dk }
	
		\author{S{\o}ren Eilers}
        \address{Department of Mathematical Sciences \\
        University of Copenhagen\\
        Universitetsparken~5 \\
        DK-2100 Copenhagen, Denmark}
        \email{eilers@math.ku.dk }

	\author{Efren Ruiz}
        \address{Department of Mathematics\\University of Hawaii,
Hilo\\200 W. Kawili St.\\
Hilo, Hawaii\\
96720-4091 USA}
        \email{ruize@hawaii.edu}
\date{\today}

\title{Geometric classification of isomorphism of unital graph $C^*$-algebras}
\maketitle

\newcommand{\companion}[1]{\cite[\ref{#1}]{seer:rmsigc}}
\newcommand{\companionpaper}{\cite{seer:rmsigc}}
\section{Introduction}

As is often the case, the classification problem for $*$-isomorphism and for stable isomorphism among unital graph $C^*$-algebras were completed in tandem, in this case because the authors of \cite{segrerapws:ccuggs} were able to extract the exact result as a corollary to the stabilized one. A key element in the proof of the stabilized result --- highly interesting in itself --- was the geometric description of the equivalence relation among such graphs induced by having the same graph $C^*$-algebra up to stable isomorphism, which in \cite{segrerapws:ccuggs} was given a concrete description as the smallest equivalence relation containing a list of \emph{moves} changing one graph into another in an invariant fashion. In other words, it was established that  two such graphs define the same stabilized graph $C^*$-algebras if and only it is possible to transform one into the other by a finite number of moves of this type (or their inverses), much in the way that Reidemeister moves determine homotopy of knots.

The work in \cite{segrerapws:ccuggs} did not provide an answer to the natural question of whether or not such a geometric description could be obtained for exact $*$-isomorphism, and we present a positive solution to that question here. Using variations of the moves used in  \cite{segrerapws:ccuggs}  which have been carefully chosen to preserve the $C^*$-algebra itself rather than its stabilization whilst retaining the necessary flexibility, we show that  the equivalence relation among such graphs induced by having the same graph $C^*$-algebra ``on the nose'' may also be given a concrete description as the smallest equivalence relation containing a list of geometric moves.


Our strategy of proof is by now standard, an elaboration of the original approach by Franks \cite{jf:fesft} to classify irreducible shifts of finite type up to flow equivalence which draws significantly on previous refinements by Boyle and Huang (\cite{mbdh:pbeim},\cite{mb:fesftpf}) and by two of the authors with Restorff and S\o{}rensen \cite{segrerapws:ccuggs}. The key idea is to transform the question into one in algebra by proving that fundamental matrix operations such as row and column addition to adjacency matrices defined by the graphs under study are generated by a succession of legal moves.   Our tool for doing this is an \emph{antenna calculus} developed for this purpose which is used to represent the information remembered by the exact $*$-isomorphism class (but forgotten after stabilization) by means of simple auxiliary configurations in such graphs, or -- equivalently, as we shall see --  as a vector complementing the adjacency matrix.

The work presented here was initiated during the 2016 Institut Mittag-Leffler focus program ``Classification of operator algebras: complexity, rigidity, and dynamics'' where we proved that $*$-isomorphism among unital graph $C^*$-algebras was generated by a list of specific moves, and we presented our results at the  program's workshop ``Classification
and discrete structures''. This work contained the development of the \RRRp\ move which is essential here, but one of the other moves on our list was so arithmetic in nature that we couldn't really defend calling our result geometric, and hence we refrained from publishing the result.

Recently  the second and third authors have initiated in \cite{seer:rmsigc} a systematic study of a refined collection of moves with the property that relevant subclasses of these moves generate isomorphisms of the graph $C^*$-algebras which respect additional structure such as diagonals and the canonical gauge action, and we were led to the definition of a refined type of insplitting -- the \IIIp\ move -- which not only induces $*$-isomorphism as opposed to the original version's stable isomorphism, but also respects the additional structure mentioned above. To our immense satisfaction we have been able to show that the arithmetic move abandoned by the authors can be induced by the \IIIp\ move along with the other honestly geometric moves already on our list, and hence we are now able to present a list of natural moves which all induce $*$-isomorphism, and prove that any $*$-isomorphism is induced by these moves by appealing to the argument we developed several years ago.

The bulk of the paper is devoted to the first step of Franks' approach: To show that the elementary matrix operations may be implemented by geometric moves. This is particularly tricky in the situation studied here, but the most challenging technical difficulty is the same in all such problems: To ensure that the matrices visited as one tries to implement the given data are non-negative in an appropriate sense allowing them to make sense as adjacency matrices for intermediate graphs as well. For this, thankfully, we may appeal to the complicated analysis in \cite{segrerapws:ccuggs} with rather minor adjustments.

In the interest of brevity, we relegate the proof that the improved moves indeed respect the exact isomorphism class of the $C^*$-algebras to the companion paper \cite{seer:rmsigc}, but we will describe them with some care below.

\subsection{Acknowledgements}

The first and second named authors were supported by 
the Danish National Research Foundation through the Centre for Symmetry and Deformation
(DNRF92).
The second named author was further supported by the DFF-Research
Project 2 `Automorphisms and Invariants of Operator Algebras', no.\ 7014-00145B. The third named author was supported by
a Simons Foundation Collaboration Grant, \# 567380.

During the initial phases of this work the first named author was a postdoctoral fellow at the Mittag-Leffler Institute. All authors thank the Institute and its staff for the excellent working conditions provided.

\section{Preliminaries}

\subsection{Notation and conventions}

We use the definition of graph $C^*$-algebras in \cite{njfmlir:cig} in which sinks and infinite emitters are singular vertices, and always consider $C^*(E)$ as a universal $C^*$-algebra generated by Cuntz-Krieger families $\{s_e,p_v\}$ with $e$ ranging over edges and $v$ ranging over vertices in $E$.

Unless stated otherwise, graphs $E$, $F$ will always be considered as having finitely many vertices and finitely or countably infinitely many edges. We generally follow notation from \cite{segrerapws:ccuggs,segrerapws:gcgcfg}, but will deviate slightly from these papers when describing graphs by matrices as explained below.

\subsection{Legal moves}

We briefly introduce the five types of moves we are considering. See \companionpaper\ for a full discussion.

In- and out-splitting works as in symbolic dynamics by distributing the incoming (resp. outgoing) edges to new vertices according to a given partition, and duplicating the outgoing (resp. incoming) ones. Note, however, the lack of symmetry below: Out-splitting may take place everywhere, but the partition cannot contain empty sets. In-splitting is restricted to regular vertices, but empty sets are allowed.

\begin{defin}[Move \OOO: Outsplit at a non-sink]\label{OOO}
Let $E = (E^0 , E^1 , r, s)$ be a graph, and let $w \in E^0$ be a vertex that is not a sink. 
Partition $s^{-1} (w)$ as a disjoint union of a finite number of nonempty sets
$$s^{-1}(w) = \mathcal{E}_1\sqcup \mathcal{E}_2\sqcup \cdots \sqcup\mathcal{E}_n$$
with the property that at most one of the $\mathcal{E}_i$ is infinite. 
Let $E_O$ denote the graph $(E_O^0, E_O^1, r_O , s_O )$ defined by 
\begin{align*}
E_O^0&:= \setof{v^1}{v \in E^0\text{ and }v \neq w}\cup\{w^1, \ldots, w^n\} \\
E_O^1&:= \setof{e^1}{e \in E^1\text{ and }r(e) \neq w}\cup \setof{e^1, \ldots , e^n}{e \in E^1\text{ and }r(e) = w} \\
r_{E_O} (e^i ) &:= 
\begin{cases}
r(e)^1 & \text{if }e \in E^1\text{ and }r(e) \neq w\\
w^i & \text{if }e \in E^1\text{ and }r(e) = w
\end{cases} \\
s_{E_O} (e^i ) &:= 
\begin{cases}
s(e)^1 & \text{if }e \in E^1\text{ and }s(e) \neq w \\
s(e)^j & \text{if }e \in E^1\text{ and }s(e) = w\text{ with }e \in \mathcal{E}_j.
\end{cases}
\end{align*}
We  say $E_O$ is formed by
performing move \OOO\ to $E$.
\end{defin}

\begin{defin}[Move \IIIm:  Insplitting] \label{IIIm}
Let $E = ( E^0, E^1, r_E , s_E )$ be a graph and let $w \in E^0$ be a regular vertex.  Partition $r^{-1} (w)$ as a finite disjoint union of (possibly empty) subsets,
\[
r^{-1} (w) = \mathcal{E}_1 \sqcup \mathcal{E}_2 \sqcup \cdots \sqcup \mathcal{E}_n.
\]

Let $E_I = ( E_I^0 , E_I^1, r_{E_I} , s_{E_I} )$ be the graph defined by 
\begin{align*}
E^0_I &= \{ v^1 \ : \ v \in E^0 \setminus \{ w \} \} \cup \{ w^1, w^2, \ldots, w^n \} \\
E^1_I &= \{ e^1 \ : \ e \in E^1, s_E (e) \neq w \} \cup \{ e^1 , e^2 , \ldots, e^n \ : \ e \in E , s_E (e)  = w \} \\
s_{E_I}(e^i ) &= \begin{cases} s_E(e)^1 &\text{if $e \in E^1, s_E (e) \neq w$} \\ w^i &\text{if $e \in E^1, s_E (e) = w$} \end{cases} \\
r_{E_I}(e^i ) &= \begin{cases} r_E(e)^1 &\text{if $e \in E^1, r_E (e) \neq w$} \\ w^j &\text{if $e \in E^1, r_E (e) = w, e \in \mathcal{E}_j$} \end{cases}
\end{align*}
We  say $E_I$ is formed by
performing move \IIIm\ to $E$.
\end{defin}

\begin{defin}[Move \IIIp:  Unital Insplitting] \label{IIIp}
The graphs $E$ and $F$ are said to be \emph{move $\mbox{\texttt{\textup{(I+)}}}$ equivalent} if there exists a graph $G$ and a regular vertex $w\in G^0$ such that $E$ and $F$ are both the result of an \IIIm\ move applied to  $G$ via a partition of $r_{G}^{-1}(w)$ using $n$ sets.
\end{defin}

Note that we do \textbf{not} consider the \IIIm\ move further in this paper --- it leaves  $C^*(E)\otimes\K$ invariant, but not $C^*(E)$. It is convenient to think of an $\IIIp$ move as the result of redistributing the past of vertices having the same future. For instance we have
\[
\xymatrix{\bullet\ar@(dl,ul)[]&&\bullet\ar[ll]&&\bullet\ar@(dl,ul)[]&&\bullet\ar[ll]&&\bullet\ar@(dl,ul)[]&&\bullet\ar[ll]\\
&\bullet\ar[ur]\ar[ul]&\ar@{<~>}[r]^{\IIIp}&&&\bullet\ar@<-0.5mm>[ur]\ar@<0.5mm>[ur]&\ar@{<~>}[r]^{\IIIp}&&&\bullet\ar@<-0.5mm>[ul]\ar@<0.5mm>[ul]}
\]
since all graphs may be obtained by an \IIIm\ move applied to $\quad\quad\xymatrix{\bullet\ar@(dl,ul)[]&\bullet\ar@<-0.5mm>[l]\ar@<0.5mm>[l]}$ with two sets in the partition.

\begin{defin}[Move $\RRRp$:  Unital Reduction]\label{RRRp}
Let $E$ be a graph and let $w$ be a regular vertex which does not support a loop.  Let $E_{R+}$ be the graph defined by 
\begin{align*}
E_{R+}^0 &= ( E^0  \setminus \{ w \} ) \sqcup \{ \widetilde{w} \} \\
E_{R+}^1 &= \left( E^1 \setminus (  r_E^{-1} (w) \cup s_E^{-1} (w)  ) \right) \sqcup \{ [ ef ] \ : \ e \in r_E^{-1}(w) , f \in s_{E}^{-1}(w) \} \sqcup \{ \widetilde{f} \ : \ f\in s_E^{-1}(w) \}
\end{align*}
where the source and range maps of $E_{R+}$ extend those of $E$, and satisfy $s_{E_{R+}} ( [ef] ) = s_E(e)$, $s_{E_{R+}}( \widetilde{f} ) = \widetilde{w}$, $r_{E_{R+}} ( [ef] ) = r_{E}(f)$ and $r_{E_{R+}} ( \widetilde{f} ) = r_E(f)$.
\end{defin}

The \RRRp\ move is best thought of as the result of removing a vertex and replacing all two-step paths through it by direct paths. The outgoing edges from the deleted vertex are preserved as edges from a source.

The \CCCp\ and \PPPp\ moves are defined by gluing on small graphs to the existing one under very precisely given conditions. Examples are
\[
\xymatrix{\circ\ar@{..>}[d]&\\\bullet\ar@(l,d)[]\ar@(dl,dr)[]\ar@{..>}@/^/[r]&\circ \ar@{..>}@/^/[r]\ar@(dl,dr)@{..>}[]\ar@{..>}@/^/[l]&\circ\ar@{..>}@/^/[l]\ar@{..>}@(dl,dr)[]\\&}
\]
and
\[
\xymatrix{&\circ\ar@{..>}@(ld,lu)[]\ar@{..>}@/^/[d]&&\circ\ar@{..>}[ll]\ar@{..>}[rr]&&\circ\ar@{..>}@(rd,ru)[]\ar@{..>}@/^/[d]&\\
&\circ\ar@{..>}@(ld,lu)[]\ar@{..>}@/^/[u]\ar@{..>}@/_/[dr]&&\bullet\ar@{..>}@/_/[urr]\ar@{..>}@/_/@<1mm>[urr]\ar@{..>}@/^/[ull]\ar@{..>}@/^/@<1mm>[ull]\ar@(ul,ur)[]\ar@/^/[dr]\ar@/_/[dl]&&\circ\ar@{..>}@/^/[u]\ar@{..>}@(rd,ru)[]\ar@{..>}@/^/[dl]&\\
&&\bullet\ar@(d,l)[]\ar@(ld,rd)[]\ar@{..>}@/_/[ul]&&\bullet\ar@(d,l)[]\ar@(ld,rd)[]\ar@(r,d)[]\ar@{..>}@/^/[ur]&\\&&&}
\]
where the new parts of the graphs are indicated with unfilled vertices and dotted arrows.
Since they will not play a very central role in the arguments in the present paper, we refer to \companionpaper\ for a full discussion. 

Throughout the paper we say ``moves of type ${\mbox{\texttt{\textup{(X)}}}}$'' when we refer to a collection of such moves and their inverses.
This applies in particular to the collection of moves of type \OOO, \IIIp, and \RRRp. In fact the remaining two types are in an appropriate  sense their own inverses.

\begin{theor}[\cite{seer:rmsigc}]\label{fromcompanion}
When $F$ is obtained from $E$ by one of the moves
\[
\OOO,\IIIp,\RRRp,\CCCp,\PPPp
\]
then $C^*(E)\simeq C^*(F)$.
\end{theor}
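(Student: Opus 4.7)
My plan is to handle each of the five moves separately, always exploiting the universal property of $C^*(E)$: to produce a $*$-isomorphism $C^*(E)\to C^*(F)$, it suffices to exhibit a Cuntz--Krieger $E$-family inside $C^*(F)$, produce a canonical $*$-homomorphism, and then verify faithfulness using the gauge-invariant uniqueness theorem (after checking that the constructed family admits a gauge-equivariant structure, i.e.\ all constructed projections are non-zero and the generating partial isometries can be scaled compatibly by the circle action of $C^*(F)$). Running this argument in both directions and verifying that the two maps invert each other on generators then yields the desired isomorphism. Since these moves are modifications of the classical in/out-splittings and reductions from symbolic dynamics adapted to the unital (i.e.\ non-stabilized) setting, one expects the isomorphisms to be natural and manifestly unital.

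Concretely: for move \OOO\ with partition $s^{-1}(w)=\mathcal{E}_1\sqcup\cdots\sqcup\mathcal{E}_n$, I would define, inside $C^*(E)$, projections $q_{w^i}:=\sum_{e\in\mathcal{E}_i}s_e s_e^*$ (replacing $p_w$) when $\mathcal{E}_i$ is finite, and a corresponding adjusted element when the one infinite class appears, together with partial isometries $t_{e^i}:=s_e q_{w^i}$ for edges with range $w$; routine checking shows these form a Cuntz--Krieger $E_O$-family whose universal extension to $C^*(E_O)$ is a $*$-isomorphism because $\sum_i q_{w^i}=p_w$ recovers the full unit. For \IIIp, the best way is to exhibit both graphs as outcomes of a single \IIIm\ on a common graph $G$ and then build mutually inverse Cuntz--Krieger families by matching the decomposition of $p_w$ in the two resulting algebras via the partition of $r_G^{-1}(w)$; the unitality of the preserved isomorphism is what distinguishes $\IIIp$ from the older \IIIm. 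For \RRRp, set $t_{[ef]}:=s_e s_f$ and $t_{\widetilde{f}}:=s_f$, and take $p_{\widetilde{w}}:=p_w-\sum_{e\in r_E^{-1}(w)}s_e s_e^*$ (which is zero precisely when the original vertex is not a source, so in general one needs the auxiliary $\widetilde{f}$-edges from $\widetilde{w}$ to record that projection). For \CCCp\ and \PPPp\ the glued subgraphs are designed so that the new projections/partial isometries are absorbed by an existing Cuntz--Krieger relation in $C^*(E)$: I would make this precise by constructing explicit elements of $C^*(E)$ satisfying the new Cuntz--Krieger relations, using that the glued-on configurations correspond to polar decompositions of sums of path isometries already present.

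The main technical obstacle will be the \IIIp\ case, because unlike classical insplitting it has to preserve the unit, and the partition is allowed to contain empty sets; this forces some of the $p_{w^i}$ to be zero and complicates the Cuntz--Krieger relations at $w^i$. The cleanest way around this is to always factor the argument through the ambient $G$ of Definition~\ref{IIIp}, so that both $E$ and $F$ are insplits of a common graph and a single $G$-family suffices. A secondary obstacle is ensuring all constructed projections are genuinely non-zero (required for the gauge-invariant uniqueness theorem); this is where the hypotheses in the definitions (e.g.\ the loop-freeness condition in \RRRp) are used, and one verifies non-vanishing by displaying an explicit path representation in the original algebra. Since these verifications are detailed but essentially mechanical once the correct families are written down, and since the authors have developed the full machinery in \companionpaper, I would simply invoke that paper for the proof and proceed.
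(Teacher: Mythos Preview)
The paper does not prove this theorem at all: it is stated with attribution to the companion paper \companionpaper, and the introduction explicitly says ``In the interest of brevity, we relegate the proof that the improved moves indeed respect the exact isomorphism class of the $C^*$-algebras to the companion paper.'' Your final sentence---``I would simply invoke that paper for the proof and proceed''---is therefore exactly what the paper does, and as a proof of the stated result your proposal is adequate.

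That said, since you also offer a sketch, let me flag one genuine error in it. In your treatment of \RRRp\ you propose
\[
p_{\widetilde{w}} := p_w - \sum_{e\in r_E^{-1}(w)} s_e s_e^*,
\]
but in the convention used here (that of \cite{njfmlir:cig}, with $s_e^*s_e=p_{r(e)}$) the projections $s_e s_e^*$ for $e\in r_E^{-1}(w)$ satisfy $s_e s_e^*\leq p_{s(e)}$, not $s_e s_e^*\leq p_w$; the difference you wrote is not even a projection. The correct assignment, consistent with your choices $t_{[ef]}=s_e s_f$ and $t_{\widetilde f}=s_f$, is simply $p_{\widetilde w}:=p_w$: since $w$ is regular the Cuntz--Krieger relation at $\widetilde w$ then reads $\sum_{f\in s_E^{-1}(w)} s_f s_f^* = p_w$, and the unit is preserved because $\sum_{v\neq w} p_v + p_{\widetilde w} = 1_{C^*(E)}$. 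Your parenthetical about $p_{\widetilde w}$ vanishing when $w$ is not a source is likewise off: $\widetilde w$ is \emph{always} a source in $E_{R+}$, and $p_{\widetilde w}$ is always nonzero. Your \OOO\ sketch, by contrast, is fine (one checks $(s_e q_{w^i})^*(s_e q_{w^i})=q_{w^i}$ and the relations assemble correctly), and your description of \IIIp\ via a common $G$ is the right organizing idea, though the actual construction of the isomorphism in \companionpaper\ is more delicate than ``matching decompositions of $p_w$'' because empty partition sets force some of the split vertex projections to interact nontrivially.
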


\subsection{$\mathsf{ABCD}$-matrices}

We shall describe graphs  by adjacency matrices in two equivalent ways, one  slightly less standard than the other, which is eminently adjusted to the technical needs in the paper. The starting point for this notation is the observation that whenever a graph contains two or more regular sources, they may be collected to one by an \OOO\ move in reverse, and since we may pass freely between graphs obtained by applying moves on our list, we may always assume that  there is at most one such vertex. We enumerate the remaining vertices by $1,\dots,n$ and refer to a vertex by its number as $\vt{i}$. Placing a regular source first, the adjacency matrix has the form
\begin{equation}\label{acsetup}
\begin{bmatrix}0&c_1&\cdots&c_n\\
0&a_{11}&\cdots &a_{1n}\\
\vdots&\vdots&&\vdots\\
0&a_{n1}&\cdots &a_{nn}
\end{bmatrix}
\end{equation}
and we denote the submatrices with entries denoted $a_{ij}$ and $c_i$ by $\AA$ and $\CC$, respectively, thinking of $\CC$ as a row vector. 
In case there is no regular sources, we let $\CC$ denote the zero vector and think of this setting as representing the graph with adjacency matrix $\AA$. Letting all $c_i=0$ in \eqref{acsetup} would give a very different system, so it is essential to deviate from the generic construction here. We think of the regular source as being ``deleted'' when $\CC=\OO$.

In most cases we work instead of $\AA$ and $\CC$ with the pair $\DBpair$ with $\BB$ a matrix with the same dimensions as $\AA$ and $\DD$ a column vector with the same number of entries as $\CC$ given by
\begin{eqnarray}
b_{ij}&=&a_{ji}-\delta_{i,j} \quad(\text{Kronecker } \delta)\label{ab}\\
d_i&=&c_i+1\label{cd}
\end{eqnarray}
We will use round parentheses on $\BB$ and $\DD$ to set them aside from $\AA$ and $\CC$ given by bracketed matrices.
It is clear that this contains the same information, and it will be obvious from Theorems \ref{row} and \ref{col}, as well as from comparison with $K$-theory, why this notation is useful. For the latter, we indicate by $\BB^\bullet$ and $\BB^\circ$ the matrices obtained by collecting, respectively, the columns corresponding to regular and singular vertices in the graph described. Then we have

\begin{lemma}\label{Ktheory}
When the graph $E$ is represented by the pair $\DBpair$, we have
\[(K_0(C^*(E)),[1_{C^*(E)}])=(\cok \BB^\bullet,\DD+\im \BB^\bullet).\]
\end{lemma}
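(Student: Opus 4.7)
The plan is to deduce the claim from the standard presentation of $K_0$ for a graph $C^*$-algebra: the group $K_0(C^*(E))$ is generated by $\{[p_v]:v\in E^0\}$ subject to the Cuntz--Krieger relations $[p_v]=\sum_{w}A_E(v,w)[p_w]$ at each regular vertex $v$, or equivalently it is the cokernel of $A_E^T-I$ restricted to the columns indexed by regular vertices; the unit is represented by $[1_{C^*(E)}]=\sum_{v\in E^0}[p_v]$.

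First, I would treat the generic case in which the auxiliary regular source indexed by $0$ is present. A direct block computation from \eqref{acsetup} yields
\[A_E^T-I=\begin{bmatrix}-1&0\\\CC^T&\BB\end{bmatrix},\]
and restriction to the regular columns---the one for vertex $0$, together with those for the regular vertices among $1,\dots,n$---produces
\[M=\begin{bmatrix}-1&0\\\CC^T&\BB^\bullet\end{bmatrix}.\]
The column of $M$ coming from vertex $0$ encodes the relation $[p_0]=\sum_i c_i[p_i]$, which lets me eliminate the generator $[p_0]$. The remaining columns of $M$ vanish in the $0$-coordinate and thus impose exactly the relations encoded by $\BB^\bullet$ on $[p_1],\dots,[p_n]$. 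Consequently $K_0(C^*(E))\cong\cok\BB^\bullet$.

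Next I would compute the unit class under this isomorphism. Using the eliminated relation together with \eqref{cd},
\[[1_{C^*(E)}]=[p_0]+\sum_{i=1}^n[p_i]=\sum_{i=1}^n c_i[p_i]+\sum_{i=1}^n[p_i]=\sum_{i=1}^n d_i[p_i],\]
which is precisely the coset $\DD+\im\BB^\bullet$. The degenerate case where no regular source is present is handled in the same spirit: by the notational convention $\CC=\OO$ gives $\DD=\EE$, and the standard presentation applied directly to the graph with adjacency matrix $\AA$ yields $K_0(C^*(E))=\cok\BB^\bullet$ with $[1_{C^*(E)}]=\sum_i[p_i]=\DD$.

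The argument is essentially bookkeeping; the only point requiring care is aligning the transpose-plus-shift convention \eqref{ab}--\eqref{cd} with the direction of the defining Cuntz--Krieger relations, and tracking which columns survive in $\BB^\bullet$. I anticipate no substantive obstacle beyond a faithful application of the cokernel presentation.
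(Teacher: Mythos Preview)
Your proposal is correct and follows essentially the same route as the paper's proof: both invoke the standard cokernel presentation of $K_0$ for graph $C^*$-algebras, write down the matrix $\left[\begin{smallmatrix}-1&0\\\CC^T&\BB^\bullet\end{smallmatrix}\right]$ in the presence of the regular source, and use the $-1$ pivot to eliminate the source generator, leaving $\cok\BB^\bullet$ with unit class $\DD$. The degenerate case with $\DD=\EE$ is dispatched identically by direct appeal to the standard result.
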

\begin{proof}
The result is standard (see e.g. \cite{mt:okgc}) when there is no regular source and hence $\DD=\EE$. 
If not, and the regular vertices are $i_1,\dots,i_k$, the $K_0$-group is given by
\[
\cok\begin{pmatrix}-1&0&\cdots&0\\
c_1&b_{1i_1}&\cdots &b_{1i_k}\\
\vdots&\vdots&&\vdots\\
c_n&b_{ni_1}&\cdots &b_{ni_k}
\end{pmatrix}
\]
with the class of the unit represented by $\EE$. This is isomorphic to the given data $(\cok \BB^\bullet,\DD+\im \BB^\bullet)$.
\end{proof}

Let \vt{1} and \vt{2} be different  vertices which are not regular sources. It will be  instructive to depict the graph as 
\begin{equation}
\xymatrix{
&&&&&\\
&\vt{j}\ar@/^/@(r,d)[]^{a_{jj}}\ar@<1mm>@/^1.8cm/[rrrr]^{a_{j2}}\ar@<1mm>[rr]^{a_{j1}}&&\vt{1}\ar@/^/@(r,d)[]^{a_{11}}\ar@<1mm>[ll]^{a_{1j}}\ar@<-1mm>[rr]_{a_{12}}&&\vt{2}\ar@/^/@(r,d)[]^{a_{22}}\ar@<-1mm>[ll]_{a_{21}}\ar@<1mm>@/_1.8cm/[llll]^{a_{2j}}\\
\bullet\ar[ur]_{c_j}&&\bullet\ar[ur]_{c_1}&&\bullet\ar[ur]_{c_2}
}\label{initialsetup}
\end{equation}
where $\vt{j}$ with $j>2$ is an arbitrary vertex which is not a regular source, and entries $a_{ij}$ are in the full range $0\leq a_{ij}\leq \infty$.  Note that entries $c_i$ must be finite, and we use the convention that if some $c_1,c_2$ or $c_j$ is zero, there is no corresponding source.
Note that we may pass between this description of the vertices emanating from each ``shadow source'' to the corresponding non-source and the setup in which there is only one source by a single \OOO\ move, so we will choose the one which is most visually convenient, which in most cases is the former, and we think of the edges enumerated by the $c_i$ as \emph{antennae} attached to an original graph.

\section{Elementary matrix operations}

In this section we show how elementary matrix operations are induced on the $\DBpair$ pair by our moves \OOO, \IIIp\ and \RRRp\ applied to the graphs they represent.  We follow the strategy of imposing any condition on the configuration necessary to establish the claims easily. In the ensuing sections we then proceed to remove many of these conditions.

\subsection{Outsplitting gives row operations}

In this subsection we study how outsplitting translates to row operations on the $\AA$ or $\BB$ matrices which also influence the   $\CC$ and $\DD$ vectors in a systematic way.


We start at \eqref{initialsetup}
and assume there 
 is at least one edge from \vt{1} to \vt{2}, and \vt{1} emits at least one other edge (to any other vertex or to \vt{1} or \vt{2}). Then we can outsplit at \vt{1} with one set in the partition being the selected edge from \vt{1} to \vt{2} and another containing the rest, and \OOO\ gives us\\[0.1cm]
\[
\xymatrix{
&&\bullet\ar[r]^{c_1}&\vt{1_1}\ar@{~>}@<-1mm>[drr]&&\\
&\vt{j}\ar@/^/@(r,d)[]^{a_{jj}}\ar@<1mm>@/^2.8cm/[rrrr]^{a_{j2}}\ar@<1mm>[rr]^{a_{j1}}\ar[urr]_{a_{j1}}&&\vt{1_2}\ar@/^/@(r,d)[]^{a_{11}}\ar@<1mm>[ll]^{a_{1j}}\ar[u]_{a_{11}}\ar@<-1mm>[rr]_{a_{12}-1}&&\vt{2}\ar@/^/@(r,d)[]^{a_{22}.}\ar@<-1mm>[ll]_{a_{21}}\ar@<-1mm>[ull]_{a_{21}}\ar@<1mm>@/_2.8cm/[llll]^{a_{2j}}\\
\bullet\ar[ur]_{c_j}&&\bullet\ar[ur]_{c_1}&&\bullet\ar[ur]_{c_2}
}
\]
The squiggly arrow is alone in the sense that there is nothing else from \vt{1_1} to \vt{2}, and we know that \vt{1_2} is not a sink.

Now we perform \RRRp\ to \vt{1_1} which we know is regular and does not support a loop, and the situation becomes
\[
\xymatrix{
&&\bullet\ar@{..>}@/^8mm/[drrr]^{c_1}&&&\bullet\ar@{-->}[d]\\
&\vt{j}\ar@/^/@(r,d)[]^{a_{jj}}\ar@<1mm>@/^2.8cm/[rrrr]^{a_{j2}}\ar@{..>}@/^14mm/[rrrr]_{a_{j1}}\ar@<1mm>[rr]^{a_{j1}}&&\vt{1}\ar@/^/@(r,d)[]^{a_{11}}\ar@<1mm>[ll]^{a_{1j}}\ar@{..>}@/^8mm/[rr]^{a_{11}}\ar@<-1mm>[rr]_{a_{12}-1}&&\vt{2}\ar@/^/@(r,d)[]^{a_{22}}\ar@{..>}@(r,u)[]_-{a_{21}}\ar@<-1mm>[ll]_{a_{21}}\ar@<1mm>@/_2.8cm/[llll]^{a_{2j}}\\
\bullet\ar[ur]_{c_j}&&\bullet\ar[ur]_{c_1}&&\bullet\ar[ur]_{c_2}
}
\]
Here all the dotted edges are induced by paths that used to go via \vt{1_1}, and the slashed edge is the extra source introduced by \RRRp. We can collect sources and  redraw this as 
\[
\xymatrix{
&&&&&\\
&\vt{j}\ar@/^/@(r,d)[]^{a_{jj}}\ar@<1mm>@/^1.8cm/[rrrr]^{a_{j2}+a_{j1}}\ar@<1mm>[rr]^{a_{j1}}&&\vt{1}\ar@/^/@(r,d)[]^{a_{11}}\ar@<1mm>[ll]^{a_{1j}}\ar@<-1mm>[rr]_{a_{12}-1+a_{11}}&&\vt{2}\ar@/^/@(r,d)[]^{a_{22}+a_{21}}\ar@<-1mm>[ll]_{a_{21}}\ar@<1mm>@/_1.8cm/[llll]^{a_{2j}}\\
\bullet\ar[ur]_{c_j}&&\bullet\ar[ur]_{c_1}&&\bullet\ar[ur]_{c_2+c_1+1}
}
\]
and then we get:

\begin{propo}\label{row}
Given a pair $\DBpair$ describing the graph $E$. When $b_{21}>0$ and $\sum_{j=1}^n b_{j1}>0$, we can go from $E$ to the graph described by the pair $\DBpairprime$ given as
\[
\DD'=\begin{pmatrix}d_1\\d_2+d_1\\d_3\\\vdots\\d_n\end{pmatrix}\quad \BB'=\begin{pmatrix}b_{11}&b_{12}&b_{13}&\dots&b_{1\n}\\
b_{21}+b_{11}&b_{22}+b_{12}&b_{23}+b_{13}&\dots&b_{2\n}+b_{1\n}\\
b_{31}&b_{32}&b_{33}&\dots&b_{3\n}\\\vdots&\vdots&\vdots&&\vdots\\b_{n1}&b_{n2}&b_{n3}&\dots&b_{n\n}
\end{pmatrix}
\] 
by  moves of type \OOO\ and \RRRp.
\end{propo}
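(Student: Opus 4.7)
The strategy is exactly the two-move sequence pictured in the diagrams immediately preceding the proposition: an \OOO\ move at \vt{1} followed by an \RRRp\ move at the newly created auxiliary vertex, with a final rearrangement of shadow sources. The work consists of (i) verifying that the hypotheses guarantee the moves are legal, and (ii) checking that the resulting graph is the one described by $\DBpairprime$.

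First I would translate the hypotheses back to the adjacency matrix $\AA$ via \eqref{ab}: $b_{21}>0$ says $a_{12}\geq 1$, i.e.\ there is at least one edge from \vt{1} to \vt{2}, and $\sum_j b_{j1}>0$ says $\sum_j a_{1j}\geq 2$, i.e.\ \vt{1} emits at least one edge besides the chosen one. These two conditions allow me to partition $s^{-1}(\vt{1})$ into two nonempty pieces, the first consisting of a single edge from \vt{1} to \vt{2} and the second containing all remaining edges out of \vt{1}; at most one of the two pieces is infinite, so the partition is a legal input for \OOO. This produces vertices $\vt{1_1}$ and $\vt{1_2}$ with $\vt{1_1}$ regular (it emits exactly one edge) and loop-free (its unique edge targets $\vt{2}\neq \vt{1_1}$), so \RRRp\ may be applied at $\vt{1_1}$.

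Next I would read off the effect of the composed move on edges and antennae, as indicated by the dotted arrows in the second diagram. Each incoming edge to \vt{1} is duplicated by \OOO\ to $\vt{1_1}$ and $\vt{1_2}$; after \RRRp\ at $\vt{1_1}$ each duplicate concatenates with the unique outgoing edge to \vt{2}, producing $a_{j1}$ new edges from \vt{j} to \vt{2} for every vertex $j$ (including $j=1,2$), while \RRRp\ also introduces one fresh source edge to \vt{2}. All other edges and antennae are untouched. Collecting the shadow sources at \vt{2}, the new entries are $a'_{j2}=a_{j2}+a_{j1}$ for $j\neq 1$, $a'_{12}=(a_{12}-1)+a_{11}$, and $c'_2=c_2+c_1+1$, with everything else unchanged. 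A direct calculation using $b_{ij}=a_{ji}-\delta_{ij}$ and $d_i=c_i+1$ then shows that this is precisely the pair $\DBpairprime$ in which row 1 of $\BB$ has been added to row 2 and $d_1$ has been added to $d_2$.

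The only real obstacle is the bookkeeping around the regular-source conventions introduced in \eqref{acsetup}--\eqref{cd}: I must check that the fresh source created by \RRRp\ and the ``shadow'' source originally feeding \vt{1} can be amalgamated with the shadow source at \vt{2} via a single \OOO\ move in reverse, so that the resulting graph genuinely fits the normal form with at most one regular source. Once that is granted, the computation of $\DD'$ collapses to the identity $c_2+c_1+1+1=d_1+d_2$, and the verification is complete.
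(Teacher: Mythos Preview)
Your proposal is correct and follows exactly the paper's approach: the paper's proof is simply ``Follow the recipe given above, and substitute by \eqref{ab} and \eqref{cd} in the conclusion,'' and you have carried out precisely that recipe with the legality checks and the $\AA\leftrightarrow\BB$, $\CC\leftrightarrow\DD$ bookkeeping made explicit. The only cosmetic difference is that you spell out why the partition for \OOO\ is valid and why $\vt{1_1}$ is a legal target for \RRRp, whereas the paper leaves this implicit in the preceding diagrams.
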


Recall $b_{11}$ might be negative, so the condition $\sum_{j=1}^n b_{j1}>0$ is not automatic from $b_{21}>0$.

\begin{proof}
Follow the recipe given above, and substitute by \eqref{ab} and \eqref{cd} in the conclusion.
\end{proof}


\subsection{Insplitting gives column operations}

We now pass to column operations on a given pair $\DBpair$. Let \vt{1} and \vt{2} be different vertices with \vt{1} regular. We start with the setup as in \eqref{initialsetup}
where this time we need to assert that there is at least one edge from \vt{2} to \vt{1}, and further 
that there is at least as many antennae to any \vt{j} as there are edges from  \vt{1} to \vt{j} for all $j$ (including $j\in\{1,2\}$). As above \vt{j} and \vt{2} could be singular.
The condition that $c_j\geq a_{1j}$  will allow us to redraw as
\[
\xymatrix{
&&&\bullet\ar[drr]^-{a_{12}}\ar[d]_-{a_{11}}\ar[dll]_-{a_{1j}}&&\\
&\vt{j}\ar@/^/@(r,d)[]^{a_{jj}}\ar@<1mm>@/^2.6cm/[rrrr]^{a_{j2}}\ar@<1mm>[rr]^{a_{j1}}&&\vt{1}\ar@/^/@(r,d)[]^{a_{11}}\ar@<1mm>[ll]^{a_{1j}}\ar@<-1mm>[rr]_{a_{12}}&&\vt{2}\ar@/^/@(r,d)[]^{a_{22}}\ar@<-1mm>[ll]_{a_{21}}\ar@<1mm>@/_2.6cm/[llll]^{a_{2j}}\\
\bullet\ar[ur]_{c_j-a_{1j}}&&\bullet\ar[ur]_{c_1-a_{11}}&&\bullet\ar[ur]_{c_2-a_{12}}
}
\]
(the source in the middle is never a sink, and if any number $c_j-a_{1j}$ is zero, the source is deleted). Renaming the middle vertices
\begin{equation}
\xymatrix{
&&&\vt{1_1}\ar[drr]^-{a_{12}}\ar[d]_-{a_{11}}\ar[dll]_-{a_{1j}}&&\\
&\vt{j}\ar@/^/@(r,d)[]^{a_{jj}}\ar@<1mm>@/^2.8cm/[rrrr]^{a_{j2}}\ar@<1mm>[rr]^{a_{j1}}&&\vt{1_2}\ar@/^/@(r,d)[]^{a_{11}}\ar@<1mm>[ll]^{a_{1j}}\ar@<-1mm>[rr]_{a_{12}}&&\vt{2}\ar@/^/@(r,d)[]^{a_{22}}\ar@<-1mm>[ll]_{a_{21}}\ar@<1mm>@/_2.8cm/[llll]^{a_{2j}}\\
\bullet\ar[ur]_{c_j-a_{1j}}&&\bullet\ar[ur]_{c_1-a_{11}}&&\bullet\ar[ur]_{c_2-a_{12}}
}\label{prepared-reuse}
\end{equation}
we see that \vt{1_1} and \vt{1_2} emit identically and hence we can use move \IIIp (see comment just after Definition \ref{IIIp}). We move one edge that used to go from \vt{2} to \vt{1_2} so that it now goes to \vt{1_1}, and obtain
\[
\xymatrix{
&&&\vt{1_1}\ar@<1mm>[drr]^-{a_{12}}\ar[d]_-{a_{11}}\ar[dll]_-{a_{1j}}&&\\
&\vt{j}\ar@/^/@(r,d)[]^{a_{jj}}\ar@<1mm>@/^2.8cm/[rrrr]^{a_{j2}}\ar@<1mm>[rr]^{a_{j1}}&&\vt{1_2}\ar@/^/@(r,d)[]^{a_{11}}\ar@<1mm>[ll]^{a_{1j}}\ar@<-1mm>[rr]_{a_{12}}&&\vt{2}\ar@<1mm>@{~>}[ull]\ar@/^/@(r,d)[]^{a_{22}}\ar@<-1mm>[ll]_{a_{21-1}}\ar@<1mm>@/_2.8cm/[llll]^{a_{2j}}\\
\bullet\ar[ur]_{c_j-a_{1j}}&&\bullet\ar[ur]_{c_1-a_{11}}&&\bullet\ar[ur]_{c_2-a_{12}}
}
\]
There is no loop on \vt{1_1}, and it is regular, so we can use \RRRp\ and get
\[
\xymatrix{
&&&\bullet\ar@<1mm>@{-->}[ddrr]_-{a_{12}}\ar@{-->}[dd]_-{a_{11}}\ar@{-->}[ddll]^-{a_{1j}}&&\\&&&\\\
&\vt{j}\ar@/^/@(r,d)[]^{a_{jj}}\ar@<1mm>@/^3.8cm/[rrrr]^{a_{j2}}\ar@<1mm>[rr]^{a_{j1}}&&\vt{1}\ar@/^/@(r,d)[]^{a_{11}}\ar@<1mm>[ll]^-{a_{1j}}\ar@<-1mm>[rr]_{a_{12}}&&\vt{2}\ar@{..>}@<1mm>@/_10mm/[ll]^-{a_{11}}\ar@{..>}@/_30mm/[llll]_-{a_{1j}}\ar@/^/@(r,d)[]^{a_{22}}\ar@<-1mm>[ll]_{a_{21-1}}\ar@<1mm>@/_3.8cm/[llll]^{a_{2j}}\ar@(r,u)@{..>}[]_-{a_{12}}\\
\bullet\ar[ur]_{c_j-a_{1j}}&&\bullet\ar[ur]_{c_1-a_{11}}&&\bullet\ar[ur]_{c_2-a_{12}}
}
\]
which can be redrawn as
\[
\xymatrix{
&&&&&\\
&\vt{j}\ar@/^/@(r,d)[]^{a_{jj}}\ar@<1mm>@/^1.8cm/[rrrr]^{a_{j2}}\ar@<1mm>[rr]^{a_{j1}}&&\vt{1}\ar@/^/@(r,d)[]^{a_{11}}\ar@<1mm>[ll]^{a_{1j}}\ar@<-1mm>[rr]_{a_{12}}&&\vt{2}\ar@/^/@(r,d)[]^{a_{22}+a_{12}}\ar@<-1mm>[ll]_{a_{21}-1+a_{11}}\ar@<1mm>@/_1.8cm/[llll]^{a_{2j}+a_{1j}}\\
\bullet\ar[ur]_{c_j}&&\bullet\ar[ur]_{c_1}&&\bullet\ar[ur]_{c_2}
}
\]
after appropriate moves of type \OOO. By \eqref{ab} and \eqref{cd} we get:

\begin{propo}\label{col}
Given a pair  $\DBpair$ describing the graph $E$. When  $b_{12}>0$, $d_j\geq b_{j1}+1$ for $j>1$, and $d_1\geq b_{11}+2$, we can go from $E$ to  the graph described by the pair  $\DBpairprime$ given by 
\[
\DD'=\DD=\begin{pmatrix}d_1\\d_2\\d_3\\\vdots\\d_n\end{pmatrix}\quad \BB'=\begin{pmatrix}b_{11}&b_{12}+b_{11}&b_{13}&\dots&b_{1\n}\\
b_{21}&b_{22}+b_{21}&b_{23}&\dots&b_{2\n}\\
b_{31}&b_{32}+b_{31}&b_{33}&\dots&b_{3\n}\\\vdots&\vdots&\vdots&&\vdots\\b_{n1}&b_{n2}+b_{n1}&b_{n3}&\dots&b_{n\n}
\end{pmatrix}
\] 
by moves of type  \OOO, \IIIp, and \RRRp.
\end{propo}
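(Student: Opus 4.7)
The plan is to mirror the strategy of Proposition \ref{row}: rather than deriving row operations from outsplitting, insplitting via \IIIp\ should produce column operations on $\BB$, while the antenna count vector $\DD$ is left invariant. Translating the hypotheses through \eqref{ab}--\eqref{cd}, the conditions say that there is at least one edge from \vt{2} to \vt{1} (namely $a_{21}=b_{12}>0$), and that at every vertex \vt{j} the antennae dominate the number of edges emitted by \vt{1} into \vt{j} (the inequalities $d_j\geq b_{j1}+1$ for $j>1$ and $d_1\geq b_{11}+2$ both reduce to $c_j\geq a_{1j}$).

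First, I would use the antenna-dominance conditions to peel off $a_{1j}$ edges from each \vt{j}'s antenna and recollect them as a single auxiliary regular source that emits exactly as \vt{1} does. The two vertices \vt{1} and this auxiliary source now emit identically; by the observation following Definition \ref{IIIp}, they can be realized as two different insplittings of one common vertex, so an \IIIp\ move lets us redistribute their incoming edges. I use this freedom to take one edge from \vt{2} that currently targets \vt{1} and redirect it to the auxiliary vertex instead; this is legal precisely because $a_{21}\geq 1$.

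Next, apply \RRRp\ to the auxiliary vertex. It is regular (finite in-degree, coming from an antenna plus one edge from \vt{2}), does not support a loop, and its removal composes each incoming edge with each outgoing edge. The one new edge from \vt{2} thereby generates $a_{1j}$ new edges from \vt{2} to each \vt{j}; the antenna edge regenerates a source emitting the same edges, which can be collected with the residual antennae via \OOO-moves to restore exactly the original antenna counts $c_j$.

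The net effect is that \vt{2}'s outgoing multiplicities become $a_{2j}+a_{1j}$ for each $j$ while all other vertices' outputs and all antennae are unchanged. Substituting back through \eqref{ab}--\eqref{cd} turns the shift $a_{2j}\mapsto a_{2j}+a_{1j}$ into $b_{j2}\mapsto b_{j2}+b_{j1}$ for every $j$, and leaves $\DD$ intact, which is the desired column operation. The main obstacle is the bookkeeping ensuring every intermediate graph is valid --- the auxiliary vertex must be regular and loop-free at the moment of reduction, no edge count may go negative, and all antenna adjustments must yield non-negative values --- and this is exactly what the three hypotheses guarantee.
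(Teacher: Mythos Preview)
This is precisely the paper's argument: split off an auxiliary source emitting identically to \vt{1} using the antenna-dominance hypotheses, use \IIIp\ to redirect one edge from \vt{2} to it, apply \RRRp, and recollect sources via \OOO. Two small corrections: the auxiliary vertex is regular because \vt{1} is a finite emitter (regularity concerns out-degree, not in-degree), and at $j=1$ the shift is actually $a_{21}\mapsto a_{21}-1+a_{11}$---the $-1$ coming from the edge removed by \IIIp---which is exactly what makes the $\BB$-substitution yield $b_{12}\mapsto b_{12}+b_{11}$ as you claim.
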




%

\subsection{Insplitting gives column addition to antennae}
In this section we show how to increase the size of the $\DD$ vector in a pair $\DBpair$ without changing $\BB$.

\begin{propo}\label{antcol}
Given a pair $\DBpair$ describing the graph $E$.  When $b_{12}>0$, $d_j\geq b_{j1}+1$ for $j>1$, and $d_1\geq b_{11}+3$, we can go from $E$ to the  graph described by $\DBpairprime$ given by
\[
\DD'=\begin{pmatrix}d_1+b_{11}\\d_2+b_{21}\\d_3+b_{31}\\\vdots\\d_n+b_{n1}\end{pmatrix}\quad \BB'=\BB=\begin{pmatrix}b_{11}&b_{12}&b_{13}&\dots&b_{1\n}\\
b_{21}&b_{22}&b_{23}&\dots&b_{2\n}\\
b_{31}&b_{32}&b_{33}&\dots&b_{3\n}\\\vdots&\vdots&\vdots&&\vdots\\b_{n1}&b_{n2}&b_{n3}&\dots&b_{n\n}
\end{pmatrix}
\] 
by moves of type \OOO, \IIIp, and \RRRp.
\end{propo}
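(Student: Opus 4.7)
The plan is to follow the proof of Proposition \ref{col} almost verbatim, replacing the edge from \vt{2} to \vt{1_2} that is rerouted by the \IIIp\ move with a single antenna edge going to \vt{1_2}. Heuristically, Proposition \ref{col} took the outgoing pattern of \vt{1_1} and pasted it onto the second column of $\BB$ by funneling it through a moved edge from \vt{2}; here the same pattern should be pasted onto $\DD$ by funneling it through a moved antenna edge. The strengthened hypothesis $d_1 \geq b_{11}+3$ (compared to $d_1 \geq b_{11}+2$ in Proposition \ref{col}) is exactly what is needed to guarantee that a surplus antenna to \vt{1} remains available for rerouting after \vt{1_1} has been constructed.

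First I would start from \eqref{initialsetup} and, by the same \OOO\ manipulations leading to \eqref{prepared-reuse}, extract pieces of size $a_{11}$, $a_{12}$, $a_{1j}$ from the antennae at \vt{1}, \vt{2}, \vt{j} and assemble them into a single new source \vt{1_1} whose emission pattern matches that of the original \vt{1} (now renamed \vt{1_2}). The surviving antenna counts at \vt{1}, \vt{2}, \vt{j} become $c_1 - a_{11} \geq 1$, $c_2 - a_{12} \geq 0$, and $c_j - a_{1j} \geq 0$, respectively. Since \vt{1_1} and \vt{1_2} now emit identically, I would next apply an \IIIp\ move that transfers precisely one of the surplus antenna edges originally targeting \vt{1_2} over to target \vt{1_1}. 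The vertex \vt{1_1} is regular, carries no loop, and has a single incoming edge, so \RRRp\ applies and collapses it: the rerouted antenna source gains direct edges of multiplicities $a_{11}$, $a_{12}$, $a_{1j}$ to \vt{1}, \vt{2}, \vt{j}, and \RRRp\ further introduces a fresh antenna source with the identical emission pattern.

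A direct tally of the antenna counts then gives $c_j' = (c_j - a_{1j}) + a_{1j} + a_{1j} = c_j + a_{1j}$ for $j>1$ and $c_1' = (c_1 - a_{11} - 1) + a_{11} + a_{11} = c_1 + a_{11} - 1$, while the adjacency data among \vt{1}, \vt{2}, \vt{j} is untouched throughout. After collecting the antennae via further \OOO\ moves and translating through \eqref{ab} and \eqref{cd}, this reads as $d_j' = d_j + b_{j1}$ for all $j$ together with $\BB' = \BB$, which is the claim. I expect the main obstacle to be the careful bookkeeping of non-negativity and regularity at each intermediate graph, but these requirements translate exactly into the hypotheses $b_{12}>0$, $d_j \geq b_{j1}+1$ for $j>1$, and $d_1 \geq b_{11}+3$ imposed in the statement.
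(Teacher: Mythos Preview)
Your argument is correct and follows the same template as the paper's own proof: pass to the configuration \eqref{prepared-reuse}, use \IIIp\ to reroute incoming edges to $\vt{1_1}$, then collapse $\vt{1_1}$ by \RRRp. The one difference is that the paper reroutes \emph{two} edges to $\vt{1_1}$ via \IIIp\ --- both the surplus antenna edge and an edge from $\vt{2}$ --- so that after \RRRp\ it lands at the pair $(\DD',\BB'')$ with $\BB''$ having column~$1$ added to column~$2$; it then observes that this is exactly what Proposition~\ref{col} produces from $(\DD',\BB)$ and invokes reversibility to finish. Your version reroutes only the antenna edge, so the adjacency data among the non-source vertices is never disturbed and you reach $(\DD',\BB)$ in one stroke. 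This is a genuine streamlining: it avoids the detour through Proposition~\ref{col} and, incidentally, never uses the hypothesis $b_{12}>0$ at all (that hypothesis is needed in the paper's proof both to have an edge from $\vt{2}$ to move and to meet the requirements of Proposition~\ref{col} at the end).
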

\begin{proof}
We proceed as in the previous section, but require further that $c_1>a_{11}$. Then when we get to the stage \eqref{prepared-reuse}
the middle source in the bottom supports at least one edge and hence has not been deleted. We use \IIIp\ as before, but as we may, we also move one edge from the shadow source of \vt 1 over to \vt{1_1}, so we get
\[
\xymatrix{
&&\bullet\ar@{~>}[r]&\vt{1_1}\ar@<1mm>[drr]^-{a_{12}}\ar[d]_-{a_{11}}\ar[dll]_-{a_{1j}}&&\\
&\vt{j}\ar@/^/@(r,d)[]^{a_{jj}}\ar@<1mm>@/^2.8cm/[rrrr]^{a_{j2}}\ar@<1mm>[rr]^{a_{j1}}&&\vt{1_2}\ar@/^/@(r,d)[]^{a_{11}}\ar@<1mm>[ll]^{a_{1j}}\ar@<-1mm>[rr]_{a_{12}}&&\vt{2}\ar@<1mm>@{~>}[ull]\ar@/^/@(r,d)[]^{a_{22}}\ar@<-1mm>[ll]_{a_{21-1}}\ar@<1mm>@/_2.8cm/[llll]^{a_{2j}}\\
\bullet\ar[ur]_{c_j-a_{1j}}&&\bullet\ar[ur]_{c_1-a_{11}-1}&&\bullet\ar[ur]_{c_2-a_{12}}
}
\]
Now as we perform \RRRp\ there will have been indirect paths from the new source to everything else, the net effect being that the number of antennae arising from the \RRRp\ move is doubled, resulting in
\[
\xymatrix{
&&&\bullet\ar@<1mm>@{-->}[ddrr]_-{2a_{12}}\ar@{-->}[dd]_-{2a_{11}}\ar@{-->}[ddll]^-{2a_{1j}}&&\\&&&\\\
&\vt{j}\ar@/^/@(r,d)[]^{a_{jj}}\ar@<1mm>@/^3.8cm/[rrrr]^{a_{j2}}\ar@<1mm>[rr]^{a_{j1}}&&\vt{1_2}\ar@/^/@(r,d)[]^{a_{11}}\ar@<1mm>[ll]^-{a_{1j}}\ar@<-1mm>[rr]_{a_{12}}&&\vt{2}\ar@{..>}@<1mm>@/_10mm/[ll]^-{a_{11}}\ar@{..>}@/_30mm/[llll]_-{a_{1j}}\ar@/^/@(r,d)[]^{a_{22}}\ar@<-1mm>[ll]_{a_{21-1}}\ar@<1mm>@/_3.8cm/[llll]^{a_{2j}}\ar@(r,u)@{..>}[]_-{a_{12}}\\
\bullet\ar[ur]_{c_j-a_{1j}}&&\bullet\ar[ur]_{c_1-a_{11}-1}&&\bullet\ar[ur]_{c_2-a_{12}}
}
\]
in which sources can be collected to form
\[
\xymatrix{
&&&&&\\
&\vt{j}\ar@/^/@(r,d)[]^{a_{jj}}\ar@<1mm>@/^1.8cm/[rrrr]^{a_{j2}}\ar@<1mm>[rr]^{a_{j1}}&&\vt{1}\ar@/^/@(r,d)[]^{a_{11}}\ar@<1mm>[ll]^{a_{1j}}\ar@<-1mm>[rr]_{a_{12}}&&\vt{2}\ar@/^/@(r,d)[]^{a_{22}+a_{12}}\ar@<-1mm>[ll]_{a_{21}-1+a_{11}}\ar@<1mm>@/_1.8cm/[llll]^{a_{2j}+a_{1j}}\\
\bullet\ar[ur]_{c_j+a_{1j}}&&\bullet\ar[ur]_{c_1+a_{11}-1}&&\bullet\ar[ur]_{c_2+a_{12}}
}
\]
This graph is represented by $\DBpairprime$ given by
\[
\DD'=\begin{pmatrix}d_1+b_{11}\\d_2+b_{21}\\d_3+b_{31}\\\vdots\\d_n+b_{n1}\end{pmatrix}\quad \BB'=\begin{pmatrix}b_{11}&b_{12}+b_{11}&b_{13}&\dots&b_{1\n}\\
b_{21}&b_{22}+b_{21}&b_{23}&\dots&b_{2\n}\\
b_{31}&b_{32}+b_{31}&b_{33}&\dots&b_{3\n}\\\vdots&\vdots&\vdots&&\vdots\\b_{n1}&b_{n2}+b_{n1}&b_{n3}&\dots&b_{n\n}
\end{pmatrix}
\] 
which is exactly the result of applying  moves of type \OOO, \IIIp, and \RRRp\ as in Proposition \ref{col} to the graph described by $(\DD',\BB)$, noting that the requirements are met because of our assumptions. This proves the claim.
\end{proof}


\section{General matrix operations}

In this section we generalize the elementary matrix operations of the previous section to much more general settings under rather modest conditions on the graphs studied. We also discuss in this context the possibility of performing the operations in reverse, as row or column subtractions rather than additions.

We generalize row addition before specifying conditions, as it will be convenient to prove that the conditions are always obtainable after moves of type \OOO, \IIIp\ or \RRRp.

\subsection{Improved row addition}

\begin{propo}\label{rowplus}
Given a pair $\DBpair$ describing the graph $E$. When two different vertices $\vt{i}$ and $\vt{j}$ are given so that $\vt{i}$ supports a loop, and there is a path from $\vt{i}$ to $\vt{j}$,  then we can go from $E$ to the graph described by the pair $\DBpairprime$ given
\[    
\DD'=\begin{pmatrix}\vdots\\d_{j-1}\\d_j+d_i\\d_{j+1}\\\vdots\end{pmatrix}\quad \BB'=\begin{pmatrix}
\vdots&\vdots&\vdots&&\vdots\\
b_{j-1,1}&b_{j-1,2}&b_{j-1,3}&\dots&b_{j-1,\n}\\
b_{j1}+b_{i1}&b_{j2}+b_{i2}&b_{j3}+b_{i3}&\dots&b_{j\n}+b_{i\n}\\
b_{j+1,1}&b_{j+1,2}&b_{j+1,3}&\dots&b_{j+1,\n}\\
\vdots&\vdots&\vdots&&\vdots
\end{pmatrix}
\] 
by  moves of type \OOO\ and \RRRp.
\end{propo}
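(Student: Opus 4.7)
My plan is to prove the proposition by induction on the length $k$ of a chosen path from $\vt{i}$ to $\vt{j}$.

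For the base case $k=1$, a direct edge $\vt{i} \to \vt{j}$ combined with the loop at $\vt{i}$ forces $\vt{i}$ to emit at least two edges, giving $b_{ji}>0$ and $\sum_\ell b_{\ell i}>0$. So Proposition \ref{row} applies after the appropriate relabeling, yielding the desired row addition using only moves of type \OOO\ and \RRRp.

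For the inductive step with a path $\vt{i} \to v_1 \to \cdots \to v_k = \vt{j}$ of length $k \geq 2$, the idea is to contract the interior of the path via a sequence of \RRRp\ moves, apply the base case to the resulting direct edge, and then undo the contractions. When each intermediate $v_\ell$ is a regular vertex carrying no loop, I would perform an \RRRp\ move at each of $v_{k-1}, v_{k-2}, \ldots, v_1$ in turn; each such move bypasses the vertex and shortens the path by one, so that in the resulting graph $\vt{i}$ and $\vt{j}$ are joined by a direct edge. Proposition \ref{row} then applies to this direct edge (the loop at $\vt{i}$ is preserved throughout), producing the addition $R_{v_k} \to R_{v_k} + R_i$ in the contracted matrix. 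Finally the \RRRp\ moves are inverted in reverse order to reinstate each $v_\ell$ with its original local structure; the factoring in each inverse step is chosen so that exactly one direct edge is un-bypassed through the re-introduced vertex, preserving the row addition while restoring the original graph on the intermediate vertices.

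If some intermediate vertex $v_\ell$ is not regular without a loop, preparatory moves will be needed. When $v_\ell$ is singular (an infinite emitter), an initial \OOO\ move at $v_\ell$ separates the path-continuing edge into its own partition, producing a regular copy of $v_\ell$ that emits only the single path edge; this copy can then be \RRRp-ed as before. When $v_\ell$ supports a loop, the path is split at $v_\ell$: I would apply the induction hypothesis first to the sub-path $\vt{i} \to \cdots \to v_\ell$ (using the loop at $\vt{i}$) and then to $v_\ell \to \cdots \to v_k$ (using the loop at $v_\ell$), cancelling the intermediate contribution of $R_{v_\ell}$ by an inverse move.

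The main obstacle is the bookkeeping of the shadow sources and antennae produced and consumed by the successive \RRRp\ moves and their inverses; in particular, verifying that the final result matches $(\DD', \BB')$ exactly, with the modification $d_j \to d_j + d_i$ in the $\DD$-vector and no residual extra antennae. This will require that the factorings in the inverse \RRRp\ steps be arranged so that exactly one shadow source antenna is consumed per inverse step, while the row addition effect produced in the contracted graph propagates back faithfully to the original vertex set.
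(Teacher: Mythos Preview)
Your base case is fine, but the inductive step has a genuine gap. The plan to contract an intermediate vertex $v_\ell$ via \RRRp, perform the row addition in the contracted graph, and then undo the \RRRp\ does not work in general, because \RRRp\ at a vertex that emits more than one edge has a \emph{multiplicative} effect on the adjacency data: every path of length two through $v_\ell$ becomes a direct edge, so the number of new edges from $p$ to $q$ is $a_{p,\ell}\cdot a_{\ell,q}$. In the $\DBpair$ picture this is not a row operation at all, and after you have altered row $j$ in the contracted graph there is no reason why an inverse \RRRp\ should even be available (the required factorisation of the new edge multiplicities need not exist), let alone restore exactly the original local structure. Your fallback for an intermediate vertex supporting a loop is likewise incomplete: adding row $i$ to row $v_\ell$ and then the modified row $v_\ell$ to row $j$ leaves row $j$ equal to $R_j+R_{v_\ell}+R_i$ and row $v_\ell$ equal to $R_{v_\ell}+R_i$; cancelling the extra $R_{v_\ell}$ from row $j$ and restoring row $v_\ell$ both require row \emph{subtractions}, which are not available to you here and may produce illegal negative entries.

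The paper sidesteps both problems with two ideas. First, it splits the intermediate vertices according to whether $\sum_k b_{k\ell}=0$, i.e.\ whether $\vt\ell$ emits a single edge. For such $\ell$ a direct \RRRp\ move \emph{is} clean: it is exactly ``add row $\ell$ to the next row and delete $\ell$''. For all other intermediates (including those with loops or infinite emitters) it instead applies Proposition~\ref{row} to add row $\ell$ to row $j$, which is legal precisely because such $\vt\ell$ emits at least two edges. Second, rather than trying to undo anything, it cascades these operations from $\ell=j-1$ down to $\ell=1$, arriving at a pair $\DBpairprimeprime$ in which row $j$ has become $\sum_{\ell=1}^j$ of the original rows; then it observes that starting instead from the target $\DBpairprime$ and running the same cascade with one fewer step reaches the \emph{same} $\DBpairprimeprime$. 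Invertibility of the moves then connects $\DBpair$ to $\DBpairprime$.
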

\begin{proof}
We may assume that $i=1$ and that there is a minimal path from $\vt{1}$ to $\vt{j}$ passing through $\vt{2},\dots,\vt{j-1}$ in order. Hence $a_{\ell,\ell+1}=b_{\ell+1,\ell}>0$.
Our argument depends on whether  the intermediate $1< \ell<j$ fall in the set
\[
S=\left\{\ell\in\{1,\dots, j-1\}\left|\sum_{i=1}^n b_{i\ell}=0\right\}\right.
\]
or not. Importantly, $1\not \in S$ by our assumption that $b_{11}\geq 0$ and $b_{21}>0$.

We first consider $j-1$, noting that $b_{j,j-1}>0$. If  $j-1\in S$ we know that $\vt{j-1}$ emits exactly one edge, namely to \vt{j}, and hence the setup is
\[
\xymatrix{\vt{k}\ar@(u,l)[]_{a_{kk}}\ar@<-1mm>@/^2em/[rrr]_-{a_{kj}}\ar[rr]_-{a_{k,j-1}}&&\vt{j-1}\ar[r]&\vt{j}\ar@(u,r)[]^-{a_{jj}}\ar@<-1mm>@/_2em/[lll]_-{a_{jk}}&\\
&\bullet\ar[ul]_-{c_k}&&\bullet\ar[ul]_-{c_{j-1}}&\bullet\ar[ul]_-{c_j}}
\]
(generic $k\in\{1,\dots,n\}\backslash\{j-1,j\}$) which becomes 
\[
\xymatrix{\vt{k}\ar@(u,l)[]_{a_{kk}}\ar@<-1mm>@/^2em/[rrr]_-{a_{kj}}\ar@/_2em/[rrr]_-{a_{k,j-1}}&&\bullet\ar[r]&\vt{j}\ar@(u,r)[]^-{a_{jj}}\ar@<-1mm>@/_2em/[lll]_-{a_{jk}}&\\
&\bullet\ar[ul]^-{c_k}&&\bullet\ar[u]_-{c_{j-1}}&\bullet\ar[ul]_-{c_j}}
\]
after an \RRRp\ move. This is represented by the pair
\[    
\left(\begin{pmatrix}d_1\\\vdots\\d_{j-2}\\d_{j-1}+d_j\\d_{j+1}\\\vdots\end{pmatrix}, \begin{pmatrix}
b_{11}&\cdots&b_{1,j-2}&b_{1j}&\cdots\\
\vdots&&\vdots&\vdots&\\
b_{j-2,1}&\cdots&b_{j-2,j-2}&b_{j-2,j}&\cdots\\
b_{j1}+b_{j-1,1}&\cdots&\underline{b_{j,j-2}+b_{j-1,j-2}}&b_{jj}+b_{j-1,j}&\dots\\
b_{j+1,1}&\cdots&b_{j+1,j-2}&b_{j+1,j}&\cdots\\
\vdots&&\vdots&\vdots
\end{pmatrix}\right)
\] 
where the $(j-1)$st column has also been deleted. When $j-1\not\in S$, we note that Proposition \ref{row} applies, 
and arrive at
\[    
\scalebox{.9}{$
\left(\begin{pmatrix}d_1\\\vdots\\d_{j-2}\\d_{j-1}\\d_{j-1}+d_j\\\vdots\end{pmatrix}, \begin{pmatrix}b_{11}&\cdots&b_{1,{j-2}}&b_{1,j-1}&b_{1j}&\dots\\
\vdots&&\vdots&\vdots&\vdots&\\
b_{j-2,1}&\cdots&b_{j-2,j-2}&b_{j-2,j-1}&b_{j-2,j}&\cdots\\
b_{j-1,1}&\cdots&b_{j-1,j-2}&b_{j-1,j-1}&b_{j-1,j}&\cdots\\
b_{j1}+b_{{j-1},1}&\cdots&\underline{b_{j,j-2}+b_{j-1,j-2}}&b_{j,j-1}+b_{j-1,j-1}&b_{jj}+b_{j-1,j}&\cdots\\
\vdots&&\vdots&\vdots&\vdots
\end{pmatrix}\right)$}
\] 
In either case, we may use our knowledge that $b_{j-1,j-2}>0$ to see that we may now use one of these operations to add row $j-2$ to row $j$ (because of the nonzero entry at the underlined entries) and we can continue this way until we reach the pair $\DBpairprimeprime$ obtained by replacing the $j$th row in $\DBpair$ by
\[
(\begin{pmatrix}\sum_{\ell=1}^j d_\ell\end{pmatrix}, \begin{pmatrix}\sum_{\ell=1}^j b_{\ell 1}&\cdots&\sum_{\ell=1}^j b_{\ell,j-2}&\sum_{\ell=1}^j b_{\ell,j-1}&\sum_{\ell=1}^j {b_{\ell,j}}&\cdots\end{pmatrix})
\]
and then deleting all rows and columns corresponding to entries in $S$. We note that by performing only the $j-2$ first such steps, but starting from $\DBpairprime$, we also get to $\DBpairprimeprime$, proving the claim.
\end{proof}
\subsection{Augmented canonical form}

In this section we describe the notion of \emph{augmented canonical form} and explain how it may be algorithmically arranged. The notion is the direct extension of \emph{canonical form} from \cite{segrerapws:gcgcfg} and \cite{segrerapws:ccuggs} to the antenna calculus setting, but in keeping with the more geometrical nature of this work, we will focus on graphs rather than matrices when specifying it. This may even be a useful perspective for a reader of the aforementioned papers, and we indicate in Remark \ref{sameaserrs} how the notions coincide.

As in \cite{segrerapws:gcgcfg}, we denote by $\gamma(i)$ the \emph{component} associated to a vertex \vt{i} as the largest set of vertices in $\{1,\dots,n\}$ so that $i\in\gamma(i)$ and so that whenever $j,k\in\gamma(i)$ are different, there is a path from \vt{j} to \vt{k} (and back by symmetry). We denote by $|\gamma(i)|$ the number of elements, divided
\[
|\gamma(i)|=|\gamma(i)|^\bullet+|\gamma(i)|^\circ
\]
into regular and singular vertices if necessary. Note that when $|\gamma(i)|>1$, there is always a path from \vt i back to itself, but that this is not always the case when $|\gamma(i)|=1$. We denote the set of components by $\Gamma_E$ and note that it is pre-ordered because we may say that $\gamma(i)\leq  \gamma(j)$ when there is a path from \vt j to \vt i (or when $\gamma(i)=\gamma(j)$).

Using a topological ordering of $\Gamma_E$ we may and shall think of $\AA$ and $\BB$ as block triangular matrices with the vertices ordered so that each component corresponds to a segment $i,\dots,i+k$. We work exclusively with the $\BB$, and denote the block corresponding to rows from $\gamma(i)$ and columns from $\gamma(j)$  by $\BB_{\gamma(i),\gamma(j)}$. We write $\BB_{\gamma(i)}$ for the diagonal blocks $\BB_{\gamma(i),\gamma(i)}$.
When we write $\XX\in \MG[E](\nn)$ when $\XX_{\gamma(i),\gamma(j)}=0$ unless $\gamma(i)\leq \gamma(j)$, and where $\nn$ is a vector indicating the size of each block, it is clear that $\BB\in \MG[E](\nn)$ for $\nn=(|\gamma(i)|)_{\gamma(i)\in \Gamma_E}$.
\newcommand{\mc}{\operatorname{mr}}

We associate the number $\mc(\gamma(i))$ to any component $\gamma(i)$, defined as $k+\ell$, where 
\[
\cok \BB^\bullet_{\gamma(i)}\simeq \ZZ/d_1\oplus\cdots \oplus  \ZZ/d_k\oplus \ZZ^\ell
\]
and $d_j|d_{j+1}$ for all $i$. 
It follows from $K$-theory that this number is invariant under all our moves, and in a certain sense expresses the smallest number of vertices that represents this component. We will not go into this since the condition only enters indirectly via  \cite{segrerapws:ccuggs}.


\begin{defin}\label{def:CanonicalForm}
Let $E$ be a given graph.
We say that $E$ is in \emph{augmented canonical form} if 
\begin{enumerate}[\rm (I)]
\item \label{def:canonical-item-loops} 
every regular vertex of $E$ which is not a source supports a loop;
\item \label{def:canonical-item-paths} whenever there is a path from \vt i to \vt j, there is an edge from \vt i to \vt j;
\item \label{def:canonical-item-infemit} whenever there is a path from \vt i to \vt j, and \vt i is an infinite emitter, there are infinitely many edges from \vt i to \vt j;
\item \label{def:canonical-item-singularfirst}
If there are two different paths from \vt i back to itself (neither visiting \vt i along the way), then \vt i supports  two loops, and 
 $|\gamma(i)|^\bullet\geq \max\{3,\mc(\gamma(i))+2\}$.
\end{enumerate}
If $E$ has no regular sources, we just say that $E$ is in canonical form.
\end{defin}

Note from the outset that there is a trichotomy among components in a graph that is in augmented canonical form. If one (hence all) of the vertices in the component has more than one path back to itself, then there are more than a prescribed number of regular vertices in the component by (IV), all vertices are directly connected by (II), and every vertex supports two loops by (IV) again. If this is not the case, there is only one vertex in the component because of (I). If this vertex is regular, it supports exactly one loop, and if it is singular, it has no path back to itself and hence there are no edges in the component. We also get from (III) that any infinite emitter emits with infinite multiplicity to any vertex it reaches by a path.

It is worth translating these conditions to the pair $\DBpair$, and we see first that they only involve $\BB$.
The  diagonal blocks correspond to the components themselves, and we see that they come in three flavors: One type is ``large'' and contains only positive entries by (II) and (IV) (they are even $\infty$ in all columns corresponding to singular vertices by (II)), and the other two types are ``small'' and must be one of
\[
\begin{pmatrix}0\end{pmatrix}\qquad \begin{pmatrix}-1\end{pmatrix}.
\] 
All off-diagonal blocks that are allowed to take nonzero entries by the condition defining $\MG[E](\nn)$ are in fact positive everywhere because of (II), even $\infty$ on all singular columns by (III).

\begin{propo}\label{getcanon}
Any graph $E$ may be transformed algorithmically into a graph $E'$ in augmented canonical form by moves of type \OOO, \IIIp, and \RRRp\ as follows:
\begin{enumerate}[\textsc{Step} 1]
\item Use \OOO\ moves to ensure that if \vt{i} is an infinite emitter, then it emits either infinitely many or no edges to any vertex. \item Use \RRRp\ moves to ensure that any regular vertex not supporting a loop is a source.
\item Use \OOO\ moves to ensure that  no component has only one vertex but two or more edges, so that the properties of \textsc{Step} 1 and 2 are preserved. 
\item Use improved row addition (Proposition \ref{rowplus}) to ensure that all entries in the $\BB_{\gamma(i)}$-block for any component $\gamma(i)$ with  $|\gamma(i)|>1$ are positive.
\item Use an \OOO\ move to increase the size of any component not satisfying (IV) by one, so that the properties of \textsc{Step} 1 and 2 are preserved. Go back to \textsc{Step} 4 if the  $\BB_{\gamma(i)}$-block of the outsplit graph is not positive for any $\gamma(i)$ with $|\gamma(i)|<1$.
\item Use improved row addition (Proposition \ref{rowplus}) to ensure that $b_{ij}>0$ whenever $\gamma(i)\not=\gamma(j)$, $\gamma(i)\leq\gamma(j)$  and $|\gamma(j)|>1$.
\item Use improved row addition (Proposition \ref{rowplus}) to ensure that $b_{ij}>0$ whenever $\gamma(i)\not=\gamma(j)$, $\gamma(i)\leq\gamma(j)$ 
$\BB_{\gamma(i),\gamma(j)}\not=\OO$.
\item If three vertices \vt i, \vt j and \vt k are given with an edge from \vt i to \vt j and an edge from \vt j to \vt k, but no edge from \vt i to \vt k, then use basic row addition (Proposition \ref{row}) to add row $j$ to row $k$. Then return to \textsc{Step} 7.
\end{enumerate}
 \end{propo}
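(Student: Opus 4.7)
The proof is essentially a verification that the eight-step algorithm does what it claims, so my plan is to check each step in order, confirming that (a) the prescribed moves are legal under the current hypotheses, (b) the step achieves its stated goal, and (c) invariants established at earlier steps are preserved (or, if they are temporarily broken, that the indicated backtracking loop restores them without causing an infinite regress). All of the five conditions defining augmented canonical form are translated into purely combinatorial statements about $\AA$ and about the block structure of $\BB$, so the verification is bookkeeping.

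First I would handle the setup steps. For Step~1, at every infinite emitter \vt{i} with $0<a_{ij}<\infty$ for some $j$, a single outsplit partitioning $s^{-1}(i)$ into the finitely many edges to \vt{j} versus the rest separates off a finite-emission vertex; iterating finitely often across pairs gives (III) while adding no new infinite emitters. For Step~2, repeatedly applying \RRRp{} at any non-source regular vertex without a loop strictly decreases the number of such vertices, since \RRRp\ removes \vt{w} and replaces it by a source $\widetilde{w}$, and the images of other vertices retain their loops; this achieves (I) and preserves Step~1 because \RRRp\ only glues finite chains onto finite ranges. For Step~3, a singleton component $\gamma(i)=\{i\}$ with at least two edges must have $a_{ii}\geq 2$, and outsplitting at \vt{i} with one class containing exactly one loop and one class containing the remaining loops and outgoing edges enlarges $\gamma(i)$ to size two while respecting (I) and (III).

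Next I would run through the row-addition steps. For Step~4, within a component $\gamma(i)$ with $|\gamma(i)|>1$ every regular vertex supports a loop by Step~2 and every pair of vertices is connected by a path by definition, so Proposition~\ref{rowplus} applies freely and, by the standard Boyle--Huang--type iteration already used in \cite{segrerapws:ccuggs}, repeated row additions inside the block drive every entry of $\BB_{\gamma(i)}$ strictly positive. For Step~5, whenever $|\gamma(i)|^\bullet<\max\{3,\mc(\gamma(i))+2\}$ an \OOO\ move at a regular vertex of the component (using a nontrivial partition of its outgoing edges made possible by Step~4) raises $|\gamma(i)|^\bullet$ by one while preserving Steps~1 and~2; since $\mc$ is $K$-theoretically invariant, after finitely many cycles of Steps~4--5 condition (IV) is met. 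Steps~6 and~7 use Proposition~\ref{rowplus} applied across components: for $\gamma(i)\le\gamma(j)$ with $|\gamma(j)|>1$ pick a vertex in $\gamma(j)$ that already has a loop (Step~2) and a path into $\gamma(i)$, and add its row repeatedly to rows of $\gamma(i)$ to turn each off-diagonal entry positive where (II) and (III) demand it. Finally Step~8 is a two-step-path repair: if $\vt i\to\vt j\to\vt k$ with $a_{ik}=0$, Proposition~\ref{row} applied to rows $j,k$ increments $a_{ik}$ by $a_{ij}>0$, creating the missing direct edge.

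The main obstacle, and the reason the algorithm is presented with explicit backtracking, is that positivity is the fragile condition: Steps~5 and~8 can reintroduce zero entries that Steps~4 and~7 just eliminated, and Step~7 can in principle reintroduce length-two paths without shortcut edges as handled in Step~8. I would therefore structure the termination argument around a well-founded measure that strictly decreases at each backward jump. A suitable measure is the lexicographic triple $(N_{\text{IV}},N_{\text{pos}},N_{\text{path}})$, where $N_{\text{IV}}$ counts components not yet satisfying (IV), $N_{\text{pos}}$ counts entries in allowed positions of $\BB$ that are not strictly positive, and $N_{\text{path}}$ counts length-two paths without a shortcut edge; the \OOO\ move in Step~5 strictly decreases $N_{\text{IV}}$, the row additions in Steps~4, 6, 7 decrease $N_{\text{pos}}$ without increasing $N_{\text{IV}}$, and Step~8 strictly decreases $N_{\text{path}}$ while only creating length-two paths that already have a shortcut (because the inserted edges duplicate existing ones in the component). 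Together with the preservation of Steps~1--3 throughout (verified by inspecting the effect of \OOO, \IIIp, and \RRRp\ on the relevant combinatorial data), this proves termination in a graph satisfying all four augmented-canonical-form conditions.
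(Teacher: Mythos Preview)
Your proposal follows the same step-by-step verification strategy as the paper, and most steps are checked adequately (the paper is in fact terser than you on several points, and for termination simply asserts ``the algorithm clearly terminates'').

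There is, however, a genuine gap in your termination argument for the Step~7--8 loop. Your claim that Step~8 ``only creat[es] length-two paths that already have a shortcut'' is not justified and is in fact false. Adding row~$j$ to row~$k$ creates a new edge $m\to k$ for every $m$ with $m\to j$; each such new edge yields new length-two paths $v\to m\to k$ (for any existing $v\to m$) and $m\to k\to\ell$ (for any existing $k\to\ell$), and there is no reason the shortcuts $v\to k$ or $m\to\ell$ should already be present. So $N_{\text{path}}$ can increase at Step~8, and your lexicographic measure does not decrease.

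The correct well-founded quantity for this loop is the number of pairs of components $\gamma<\gamma'$ with $\BB_{\gamma,\gamma'}=\OO$. Step~8 makes the block $\BB_{\gamma(k),\gamma(i)}$ nonzero, since $b_{ki}$ picks up the positive summand $b_{ji}=a_{ij}>0$; and no block can become zero, because row addition never decreases an entry of $\BB$ here (the only possibly negative contribution is $b_{jj}$, and when $\vt j$ is a singular singleton one has $b_{kj}=a_{jk}=\infty$ by Step~1, while in the remaining cases $b_{jj}\geq 0$). Hence this block count strictly decreases at each invocation of Step~8, and the loop terminates. A smaller issue: your assertion that Step~5 ``strictly decreases $N_{\text{IV}}$'' is not literally true for a single \OOO\ move, which raises $|\gamma(i)|^\bullet$ by one and may need to be repeated; you do give the right reason (invariance of $\mc$) a few lines earlier, so this is easily repaired.
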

\begin{proof}
\textsc{Step} 1 is obtained by placing all edges parallel to infinitely many edges in one set of the partition, and the rest in the other.
 We also note that the properties arranged in \textsc{Step} 1 and 2 will not be affected in later steps, since care is taken when applying subsequent \OOO\ moves and since row additions will never change this property.
 
 To see that  \textsc{Step} 3 is possible, note that we can replace $\begin{bmatrix}n\end{bmatrix}$ by $\left[\begin{smallmatrix}1&1\\n-1&n-1\end{smallmatrix}\right]$ and  $\begin{bmatrix}\infty\end{bmatrix}$ by $\left[\begin{smallmatrix}1&1\\\infty&\infty\end{smallmatrix}\right]$. In the latter case, we must assign all edges leaving the component to the second vertex to preserve \textsc{Step} 1. Similar arguments apply in \textsc{Step} 5.

In \textsc{Step} 4, we sum all rows into the last row and see that produces exclusively positive entries. This may then be added to all other rows. Proposition \ref{rowplus} applies because of  \textsc{Step} 2 and \textsc{Step} 3. In \textsc{Step} 6, the proposition applies because any vertex in the emitting component supports a loop, and gives positive entries in the off-diagonal component since the diagonal entry in the added row is postive (because it in fact supports two loops). In \textsc{Step} 7, we may assume by \textsc{Step} 6 that there is only one column in the block, so the given nonzero entry can be used to make all entries positive by row operations among the vertices in $\gamma(i)$. Proposition \ref{rowplus} applies when $|\gamma(i)|>1$, and when $|\gamma(i)|=1$ there is nothing to do.

In  \textsc{Step} 8,  we see by the previous steps that \vt i, \vt j and \vt k lie in three different components, and that $|\gamma(i)|=1$ so that the block $\BB_{\gamma(i),\gamma(k)}$ has only one column. We may apply Proposition \ref{row} because there is an edge from \vt j to \vt k, and because \vt j emits at least one other edge. Indeed, it will either support two loops, one loop, or be an infinite emitter depending on which element in the trichotomy it belongs to.

The algorithm clearly terminates, and it follows that (I)--(IV) are satisfied in the resulting graph.
\end{proof}

\begin{remar}\label{subtract}
Although we do not require graphs to be on augmented canonical form before performing row additions, it is relevant for performing row \emph{subtractions}. Indeed, it is obvious that when we can go from $\DBpair$ to $\DBpairprime$ by a row addition implemented by moves of type \OOO, \IIIp, and \RRRp, we can go from $\DBpairprime$ to $\DBpair$ by such moves as well. Starting from $\DBpairprime$, we just need to ensure that such an operation does not introduce entries inconsistent with the way we represent graphs: $d_i$ must be at least one, $b_{ij}$ must be nonnegative for $i\not=j$, and $b_{ii}\geq -1$.

Computing $\DBpairprime$ from $\DBpair$ is problematic in columns with infinite emitters, since $\infty-\infty$ is undefined, but it follows from (II) that any row addition in the presence of augmented canonical form does not alter such a column. Hence it makes sense to use the convention $\infty-\infty=\infty$ in this case. 
\end{remar}

\begin{remar}\label{sameaserrs}
The condition  (I) studied here implies the conditions defining $\MPZcc$ and $\MPZccc$ as well as the first half of (1) in the definition of canonical form in  \cite{segrerapws:gcgcfg,segrerapws:ccuggs}. Our (II) similarly gives $\MPZc$ and the second half of (1), and our  (III) gives (4) of canonical form in \cite{segrerapws:gcgcfg,segrerapws:ccuggs}. (IV) gives the remaining conditions (2), (3), and (5).
\end{remar}

\subsection{Increasing antenna counts}

The following result is of key technical importance for us. We will use it to increase the number of antennae to suit our needs, in particular when generalizing the column operation from Proposition \ref{col} to a much more general version. Employing an assumption of augmented standard form allows us to show that we may increase antenna counts like this in any such setting. 

\begin{theor}\label{addcolalways}
Let $E$ be a graph  in augmented canonical form represented by $\DBpair$. For any $j$ with \vt{j} regular, we can go to the graph described by the pair $\DBpairprime$ given by
\[
\DD'=\begin{pmatrix}d_1+b_{1j}\\d_2+b_{2j}\\d_3+b_{3j}\\\vdots\\d_n+b_{nj}\end{pmatrix}\quad \BB'=\BB=\begin{pmatrix}b_{11}&b_{12}&b_{13}&\dots&b_{1\n}\\
b_{21}&b_{22}&b_{23}&\dots&b_{2\n}\\
b_{31}&b_{32}&b_{33}&\dots&b_{3\n}\\\vdots&\vdots&\vdots&&\vdots\\b_{n1}&b_{n2}&b_{n3}&\dots&b_{n\n}
\end{pmatrix}
\] 
by  moves of type \OOO,  \IIIp, and \RRRp.
\end{theor}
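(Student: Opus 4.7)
The plan is to reduce Theorem \ref{addcolalways} to Proposition \ref{antcol} after preparing the antenna vector $\DD$, exploiting the structural positivity of augmented canonical form. First I would relabel so that $j=1$. Because \vt{1} is regular and $E$ is in augmented canonical form, condition (I) gives a loop at \vt{1}, and the trichotomy noted after Definition \ref{def:CanonicalForm} leaves two subcases: either \vt{1} lies in a large component $\gamma(1)$ (with $|\gamma(1)|^\bullet\geq 3$, two loops at \vt{1}, and every in-component off-diagonal entry of $\BB$ strictly positive) or \vt{1} is a singleton with exactly one loop.

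In the large component case, condition (II) furnishes a vertex $\vt{2}\in\gamma(1)$ with $b_{12}>0$, so the topological hypothesis of Proposition \ref{antcol} is satisfied; the remaining obstruction is the antenna inequalities $d_1\geq b_{11}+3$ and $d_i\geq b_{i1}+1$ for $i\neq 1$. The decisive observation is that each successful application of Proposition \ref{antcol} alters only $\DD$, preserves augmented canonical form, and strictly increases every $d_i$ with $i\in\gamma(1)$ (because $b_{ik}>0$ there). I would therefore bootstrap: apply Proposition \ref{antcol} first to auxiliary columns $k\in\gamma(1)$ whose own antenna bounds happen to hold, iterating until the bounds for column $1$ are met; apply Proposition \ref{antcol} once to column $1$ to effect the required addition; and then invoke Remark \ref{subtract} to reverse the preparatory additions so that the net change to $\DD$ is exactly column $1$ of $\BB$.

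For the singleton case, an opening \OOO\ move at \vt{1} with the partition $(\{\text{loop}\},\{\text{remaining outgoing edges}\})$ produces two vertices $\vt{1_1}$ and $\vt{1_2}$ in a configuration where $\vt{1_1}$ carries the loop and emits a single edge to $\vt{1_2}$, while $\vt{1_2}$ inherits the other outgoing edges and now has an incoming edge from $\vt{1_1}$. After re-establishing augmented canonical form via Proposition \ref{getcanon} if necessary, $\vt{1_2}$ sits in a small multi-vertex configuration and the large component argument applies to it. Undoing the opening \OOO\ move at the end translates the performed column addition back to the original labelling of \vt{1}.

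The principal obstacle is the bootstrap: guaranteeing a first legal application of Proposition \ref{antcol} even when the starting $\DD$ is close to $\EE$. Here augmented canonical form pays its dividend: the abundance of loops and positive off-diagonal entries, combined with the improved row addition of Proposition \ref{rowplus} and small ad hoc \OOO{}/\RRRp\ adjustments that transiently leave canonical form but are reversible, should let one inflate some $d_k$ above $b_{kk}+2$ and some $d_i$ above $b_{ik}$ for an auxiliary column $k$, unlocking the iteration. Verifying that every intermediate pair $\DBpair$ remains a valid representation — in particular that the undoing subtractions respect $d_i\geq 1$, off-diagonal nonnegativity, and $b_{ii}\geq -1$ — is the technical heart of the argument, and is precisely what the positivity built into canonical form was designed to guarantee.
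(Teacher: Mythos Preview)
Your outline correctly identifies Proposition \ref{antcol} as the engine and correctly isolates the bootstrap --- getting a first legal application when $\DD$ is near $\EE$ --- as the crux. But the proposal does not actually close that gap, and as written neither case goes through.

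In the large-component case you propose applying Proposition \ref{antcol} to auxiliary columns $k\in\gamma(1)$ to inflate $\DD$, but the antenna bounds $d_i\geq b_{ik}+1$ can fail simultaneously for \emph{every} such $k$ when the block $\BB_{\gamma(1)}$ has large entries and $\DD=\EE$. Invoking Proposition \ref{rowplus} does raise $d_j$, but it raises the $j$th row of $\BB$ in lockstep, so it does not obviously improve the differences $d_j-b_{jk}$; your phrase ``should let one inflate'' is precisely the content of the theorem, not a step towards it. The paper breaks this deadlock by a different device: it first out-splits \vt 1 along a single loop, which manufactures a \emph{new} first column whose off-diagonal entries are all zero except for a single $1$. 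Proposition \ref{antcol} then applies to that artificial column with essentially no antenna hypothesis, and an explicit (and rather delicate) sequence of row additions and subtractions between the two copies of \vt 1 transfers an arbitrary amount $N$ into the second entry of $\DD$, after which the out-split is undone. This is the paper's Case 3, and some trick of this kind is needed to start any induction.

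Your singleton case is also broken as stated. After out-splitting the unique loop, $\vt{1_2}$ supports no loop and is not a source (it receives from $\vt{1_1}$), so re-establishing augmented canonical form via Proposition \ref{getcanon} would apply \RRRp\ in \textsc{Step} 2 and simply delete $\vt{1_2}$, undoing your move. Moreover $\vt{1_1}$ and $\vt{1_2}$ lie in different components after the split (nothing reaches back to $\vt{1_1}$), so no ``large component argument'' becomes available. The paper instead treats the case $b_{11}=b_{12}=\cdots=b_{1n}=0$ by a direct geometric construction (Case 1): unroll the loop at \vt 1 into a cycle of length $d_1$ by inverse \RRRp\ moves, out-split once at \vt 1, apply a single \RRRp, and reroll --- this produces exactly the extra antennae $a_{1k}=b_{k1}$ without ever invoking Proposition \ref{antcol}. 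A separate Case 2 handles $b_{11}=0$ with row $1$ not identically zero by borrowing against a vertex \vt 2 with $b_{12}>0$ via row additions.
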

%

\begin{proof}
We assume without loss of generality that $j=1$ but note for later use that Proposition \ref{antcol} allows us to add any regular column, say $i$, to $\DD$ provided $d_j\geq b_{ji}+1$ for $j\not=i$, and $d_i\geq b_{ii}+3$, when there is some $j\not=i$ so that $b_{ij}>0$.


\thecase{1}  Suppose $b_{11} = b_{12} = \cdots = b_{1\n} = 0$.  Then \vt{1} supports a single loop and besides this loop receives only from a regular source. When also $ b_{21} = \cdots = b_{\n1} = 0$ there is nothing to prove, so we assume that \vt 1 emits to at least two vertices.
By repeated use of \RRRp\ in reverse we pass to the graph
\[
\xymatrix@R=0.3cm{&\bullet\ar@/^/[rr]&&\bullet\ar@/^/[dr]&&\\
\bullet\ar@/^/[ur]&&&&\vt{1}\ar[r]^-{a_{1k}}\ar@/^/[dl]&\vt{k}&\bullet\ar[l]_-{c_k}\\
&\ar@/^/[ul]&\dots&&}
\]
with the loop of length $d_1=1+c_1$, having the important property that there is exactly one incoming edge to $\vt{1}$. We now use \OOO\ at \vt{1} (using here that it emits more than one edge) and get
\[
\xymatrix@R=0.2cm{&\bullet\ar@/^/[rr]&&\bullet\ar[r]\ar@/^/[ddr]&\vt{1_1}\ar@/^/[dr]_-{a_{1k}}\\
\bullet\ar@/^/[ur]&&&&&\vt{k}&\bullet\ar[l]_-{c_k}&&\\
&\ar@/^/[ul]&\dots&&\vt{1_2}\ar[l]}
\]
and then with \RRRp\ at $\vt{1_1}$ we get
\[
\xymatrix@R=0.2cm{&\bullet\ar@/^/[rr]&&\bullet\ar@/^3.5mm/[drr]^{a_{1k}}\ar@/^/[ddr]&&\\
\bullet\ar@/^/[ur]&&&&\bullet\ar[r]^-{a_{1k}}&\vt{k}&\bullet\ar[l]_-{c_k}\\
&\ar@/^/[ul]&\dots&&\vt{1_2}.\ar[l]}
\]
(some $a_{1k}$ may be zero, but not all). Shortening the loop again with \RRRp\ moves, we arrive at the desired situation. This works also when $c_1=0$ but takes the form
\[
\xymatrix{
&\bullet\ar[d]_{c_k}&\ar@{~>}[r]^{\OOO}&&\vt{1_1}\ar[dr]_-{a_{1k}}&\bullet\ar[d]_{c_k}&\ar@{~>}[r]^{\RRRp}&&\bullet\ar[dr]_-{a_{1k}}&\bullet\ar[d]^{c_k}&\\
\vt{1}\ar@(d,l)[]\ar[r]^{a_{k1}}&\vt{k}&&&\vt{1_2}\ar@(d,l)\ar[u]&\vt{k}&&&\vt{1_2}\ar@(d,l)\ar[r]_-{a_{1k}}&\vt{k}&
}
\]

\thecase{2} Suppose $b_{11}  = 0$ but that the first row does not vanish. We may assume that $b_{12}>0$. By (II) and (IV) of our assumption of augmented canonical form, \vt{1} is alone in its component, so we conclude that $b_{21} = 0$.  Since $b_{12} \geq 1$, we may apply Proposition \ref{row} and add row $2$ to row $1$ twice and get to 
\[
\mypair{\begin{pmatrix}d_1 + 2d_2 \\d_2 \\d_3\\\vdots\\d_n \end{pmatrix}}{ \begin{pmatrix}0 &b_{12} + 2b_{22} &b_{13} + 2b_{23} &\dots&b_{1k} + 2 b_{2\n}\\
b_{21}&b_{22}&b_{23}&\dots&b_{2\n}\\
b_{31}&b_{32}&b_{33}&\dots&b_{3\n}\\\vdots&\vdots&\vdots&&\vdots\\b_{n1}&b_{n2}&b_{n3}&\dots&b_{n\n}
\end{pmatrix}}
\] 
Since $d_1,d_2>0$ we can choose $M_j\geq0$ so that
\begin{equation}\label{prepareforcol}
d_ j + M_j( d_1 + 2d_2 )  \geq b_{j1}+1
\end{equation}
for all $j$ with $b_{j1} > 0$.   For $j$ with $b_{j1} = 0$, set $M_j = 0$. For all $j$, add row 1 to row $j$, $M_j$ times to get to
\[
\scalebox{.8}{
$\mypair{\begin{pmatrix}d_1 + 2d_2 \\d_2 + M_2( d_1 + 2d_2 )  \\d_3 + M_3( d_1 + 2d_2 )\\\vdots\\d_n + M_n( d_1 + 2d_2 ) \end{pmatrix}}{ \begin{pmatrix}0 &b_{12} + 2b_{22} &b_{13} + 2b_{23} &\dots&b_{1\n} + 2 b_{2\n}\\
b_{21}  &b_{22} + M_2( b_{12} + 2b_{22} ) &b_{23} + M_2( b_{13} + 2b_{23} )&\dots&b_{2\n} + M_2( b_{1\n} + 2b_{2\n} )\\
b_{31}&b_{32}+  M_3( b_{12} + 2b_{22} ) &b_{33} + M_3( b_{13} + 2b_{23} ) &\dots&b_{3k} + M_3( b_{1\n} + 2b_{2\n} ) \\
\vdots&\vdots&\vdots&&\vdots\\
b_{n1}&b_{n2} + M_n( b_{12} + 2b_{22} ) &b_{n3} + M_n( b_{13} + 2b_{23} ) &\dots&b_{nk} + M_n( b_{1\n} + 2b_{2\n} )
\end{pmatrix}}
$
}
\] 
and note that since $b_{12} > 0$ and \eqref{prepareforcol} hold, Proposition \ref{antcol} applies to take us to
\[
\scalebox{.8}{
$\mypair{\begin{pmatrix}d_1 + 2d_2 \\d_2 + M_2( d_1 + 2d_2) + b_{21} \\d_3 + M_3( d_1 + 2d_2 ) + b_{31} \\\vdots\\d_n + M_n( d_1 + 2d_t )  + b_{n1} \end{pmatrix}}{ \begin{pmatrix}0 &b_{12} + 2b_{22} &b_{13} + 2b_{23} &\dots&b_{1\n} + 2 b_{2\n}\\
b_{21}  &b_{22} + M_2( b_{12} + 2b_{22} ) &b_{23} + M_2( b_{13} + 2b_{23} )&\dots&b_{2k} + M_2( b_{1\n} + 2b_{2\n} )\\
b_{31}&b_{32}+  M_3( b_{12} + 2b_{22} ) &b_{33} + M_3( b_{13} + 2b_{23} ) &\dots&b_{3k} + M_3( b_{1\n} + 2b_{2\n} ) \\
\vdots&\vdots&\vdots&&\vdots\\
b_{n1}&b_{n2} + M_n( b_{12} + 2b_{22} ) &b_{n3} + M_n( b_{13} + 2b_{23} ) &\dots&b_{nk} + M_n( b_{1\n} + 2b_{2\n} )
\end{pmatrix}}
$
}
\] 
and then for all $j$, subtracting row 1 from row $j$ $M_j$ times, we get to 
\[
\mypair{\begin{pmatrix}d_1 + 2d_2  \\d_2 + b_{21} \\d_3 + b_{31} \\\vdots\\d_n  + b_{n1} \end{pmatrix}}{ \begin{pmatrix}0 & b_{12} + 2b_{22} &b_{13} + 2b_{23} &\dots&b_{1\n} + 2 b_{2\n}\\
b_{21}  &b_{22}  &b_{23} &\dots&b_{2\n} \\
b_{31}&b_{32} &b_{33} &\dots&b_{3\n}   \\
\vdots&\vdots&\vdots&&\vdots\\
b_{n1}&b_{n2}  &b_{n3}  &\dots&b_{n\n} 
\end{pmatrix}}
\] 
Finally, recall that $b_{21} = 0$, thus subtracting row $2$ from row 1 twice, we get to 
\[
\mypair{\begin{pmatrix}d_1 + 0  \\d_2 + b_{21} \\d_3 + b_{31} \\\vdots\\d_n  + b_{n1} \end{pmatrix}}{ \begin{pmatrix}0 & b_{12} &b_{13}  &\dots&b_{1\n} \\
b_{21}  &b_{22}  &b_{23} &\dots&b_{2\n} \\
b_{31}&b_{32} &b_{33} &\dots&b_{3\n}   \\
\vdots&\vdots&\vdots&&\vdots\\
b_{n1}&b_{n2}  &b_{n3}  &\dots&b_{n\n}
\end{pmatrix}}
\]
by  a succession of  moves of type \OOO, \IIIp, and \RRRp.

\thecase{3} Since $E$ is in augmented canonical form,  the remaining case has  $b_{11} > 0$, and  we may assume that $b_{12} > 0$ and $b_{21} > 0$ for some regular \vt 2.   Outsplitting $\vt 1$ using  a single loop on \vt{1} in one set of the partition, and the rest of the outgoing edges in the other, we get to
\[
\mypair{\begin{pmatrix}
d_1 \\ d_1 \\ d_2 \\d_3\\ \vdots \\ d_n 
\end{pmatrix}
}{
\begin{pmatrix}
0  &   b_{11}  & b_{12}  & b_{13} & \cdots & b_{1\n} \\
1 & b_{11}-1 &b_{12}&b_{13}&\dots&b_{1\n}\\
0 & b_{21}  &b_{22}&b_{23}&\dots&b_{2\n}\\
0 & b_{31} &b_{32}&b_{33}&\dots&b_{3\n}\\
\vdots & \vdots&\vdots&\vdots&&\vdots\\
0 & b_{n1}  &b_{n2}&b_{n3}&\dots&b_{n\n} 
\end{pmatrix}}
\]
by an \OOO\ move.

We claim that we can get to
\[
\mypair{\begin{pmatrix}
d_1 \\ d_1+ N  \\ d_2 \\ d_3\\\vdots \\ d_n 
\end{pmatrix}
}{
\begin{pmatrix}
0  &   b_{11}  & b_{12}  & b_{13} & \cdots & b_{1\n} \\
1 & b_{11}-1 &b_{12}&b_{13}&\dots&b_{1\n}\\
0 & b_{21}  &b_{22}&b_{23}&\dots&b_{2\n}\\
0 & b_{31} &b_{32}&b_{33}&\dots&b_{3\n}\\
\vdots & \vdots&\vdots&\vdots&&\vdots\\
0 & b_{n1}  &b_{n2}&b_{n3}&\dots&b_{n\n} 
\end{pmatrix}}
\]
by moves of type \IIIp, \OOO\ and \RRRp\  for all $N \geq 1$.

Since the $(2,1)$-entry of the above matrix is 1, we may add row $1$ to row $2$ twice by Proposition \ref{row} and get to
\[
\mypair{\begin{pmatrix}
d_1 \\ 3d_1 \\ 
\vdots 
\end{pmatrix}
}{
\begin{pmatrix}
0  &   b_{11}  & b_{12}  & b_{13} & \cdots & b_{1\n} \\
1 & 3b_{11}-1 &3b_{12}&3b_{13}&\dots&3b_{1\n}\\
\vdots & \vdots&\vdots&\vdots&&\vdots\\
\end{pmatrix}}
\]
where we skip all unaltered lines to conserve space.   Adding row 2 to row 1 (applying Proposition \ref{row} since the $(1,2)$-entry of the above matrix is $b_{11} > 0$), we get to  
\[
\mypair{\begin{pmatrix}
4d_1 \\ 3d_1 \\
 \vdots \\ 
\end{pmatrix}
}{
\begin{pmatrix}
1  &   4b_{11}-1  & 4b_{12}  & 4b_{13} & \cdots & 4b_{1\n} \\
1 & 3b_{11}-1 &3b_{12}&3b_{13}&\dots&3b_{1\n}\\
\vdots & \vdots&\vdots&\vdots&&\vdots\\
\end{pmatrix}}
\]
 By Proposition \ref{antcol} which applies because the two first entries in the vector dominate appropriately, and because the $(1,2)$-entry in the matrix is not zero, we get to 
\[
\mypair{\begin{pmatrix}
4d_1 + N \\ 3d_1 + N \\ 
\vdots 
\end{pmatrix}
}{
\begin{pmatrix}
1  &   4b_{11}-1  & 4b_{12}  & 4b_{13} & \cdots & 4b_{1\n} \\
1 & 3b_{11}-1 &3b_{12}&3b_{13}&\dots&3b_{1\n}\\
\vdots & \vdots&\vdots&\vdots&&\vdots\\
\end{pmatrix}}
\]
and subtracting row 2 from row 1, we get to 
\[
\mypair{\begin{pmatrix}
d_1 \\ 3d_1  + N \\
\vdots 
\end{pmatrix}
}{
\begin{pmatrix}
0  &   b_{11}  & b_{12}  & b_{13} & \cdots & b_{1\n} \\
1 & 3b_{11}-1 &3b_{12}&3b_{13}&\dots&3b_{1\n}\\
\vdots & \vdots&\vdots&\vdots&&\vdots\\
\end{pmatrix}}.
\]
 Subtracting row 1 from row 2 twice, we get to 
\begin{equation}\label{putN}
\mypair{\begin{pmatrix}
d_1 \\ d_1+N\\ 
 \vdots 
\end{pmatrix}
}{
\begin{pmatrix}
0  &   b_{11}  & b_{12}  & b_{13} & \cdots & b_{1\n} \\
1 & b_{11}-1 &b_{12}&b_{13}&\dots&b_{1\n}\\
\vdots & \vdots&\vdots&\vdots&&\vdots\\
\end{pmatrix}}
\end{equation}
by moves of type \OOO, \IIIp, and \RRRp, as claimed.

Choose $N \in \NN$ such that 
\begin{align}\label{dominate}
d_j + d_1 + N + 1 &\geq b_{j1 } + b_{11}-1
\end{align}
for all $j\geq 1$ with $b_{j1} > 0$ (recall that $b_{j1}<\infty$ since \vt{1} is regular).  We now set $\Delta_j=1$ when $b_{j1}>0$ and  $\Delta_j=0$ otherwise.
%

Adding row $2$ to row $1$ to \eqref{putN}, as well as adding row 2 to row $j$ for all $j$ with $b_{j1} > 0$, we get to 
\[
\scalebox{.85}{$\mypair{\begin{pmatrix}
2d_1 + N \\ d_1+N \\ d_2 +\Delta _2(d_1 + N) \\ \vdots \\ d_n + \Delta _n(d_1 + N)
\end{pmatrix}
}{
\begin{pmatrix}
1  &   2b_{11} - 1& 2b_{12}  & 2b_{13} & \cdots & 2b_{1\n} \\
1 & b_{11}-1 &b_{12}&b_{13}&\dots&b_{1\n}\\
\Delta_2& b_{21}+\Delta_2(b_{11}-1)  &b_{22}+\Delta_2b_{12} &b_{23}+\Delta_2b_{13}&\dots&b_{2\n}+\Delta_2b_{1\n}\\
\vdots & \vdots&\vdots&\vdots&&\vdots\\
\Delta_n& b_{n1}+\Delta_n(b_{11}-1)  &b_{n2}+\Delta_nb_{12} &b_{n3}+\Delta_nb_{13}&\dots&b_{n\n}+\Delta_nb_{1\n}\\
\end{pmatrix}}.$}
\]
Applying Proposition \ref{antcol}, which applies because of \eqref{dominate}, we now get to
\[
\scalebox{.75}{$\mypair{\begin{pmatrix}
2d_1 + N+ 2b_{11}-1  \\ d_1+N  + b_{11}-1\\ d_2 +b_{21}+\Delta _2(d_1 + N+b_{11}-1) \\ \vdots \\ d_n + b_{31}+\Delta _n(d_1 + N+b_{11}-1)
\end{pmatrix}
}{
\begin{pmatrix}
1  &   2b_{11} - 1& 2b_{12}  & 2b_{13} & \cdots & 2b_{1\n} \\
1 & b_{11}-1 &b_{12}&b_{13}&\dots&b_{1\n}\\
\Delta_2& b_{21}+\Delta_2(b_{11}-1)  &b_{22}+\Delta_2b_{12} &b_{23}+\Delta_2b_{13}&\dots&b_{2\n}+\Delta_2b_{1\n}\\
\vdots & \vdots&\vdots&\vdots&&\vdots\\
\Delta_n& b_{n1}+\Delta_n(b_{11}-1)  &b_{n2}+\Delta_nb_{12} &b_{n3}+\Delta_nb_{13}&\dots&b_{n\n}+\Delta_nb_{1\n}
\end{pmatrix}}.$}
\]

Subtracting row 2 from row 1 and row 2 from row $j$ for all $j$ with $b_{j1} > 0$, we arrive at 
\[
\mypair{\begin{pmatrix}
d_1 + b_{11}  \\ d_1+N  + b_{11}-1\\ d_2 + b_{21} \\ \vdots \\ d_n + b_{n1}
\end{pmatrix}
}{
\begin{pmatrix}
0  &   b_{11} & b_{12}  & b_{13} & \cdots & b_{1\n} \\
1 & b_{11}-1 &b_{12}&b_{13}&\dots&b_{1\n}\\
0 & b_{21}  &b_{22} &b_{23} &\dots&b_{2\n}\\
\vdots & \vdots&\vdots&\vdots&&\vdots\\
0 & b_{n1}  &b_{n2}&b_{n3}&\dots&b_{n\n}
\end{pmatrix}}
\]
and applying  Proposition \ref{antcol} in reverse $N-1$ times to the first column, we get to 
\[
\mypair{\begin{pmatrix}
d_1 + b_{11}  \\ d_1 + b_{11} \\ d_2 + b_{21} \\ \vdots \\ d_n + b_{n1}
\end{pmatrix}
}{
\begin{pmatrix}
0  &   b_{11} & b_{12}  & b_{13} & \cdots & b_{1\n} \\
1 & b_{11}-1 &b_{12}&b_{13}&\dots&b_{1\n}\\
0 & b_{21}  &b_{22} &b_{23} &\dots&b_{2\n}\\
\vdots & \vdots&\vdots&\vdots&&\vdots\\
0 & b_{n1}  &b_{n2}&b_{n3}&\dots&b_{n\n} 
\end{pmatrix}}
\]
by moves of type \OOO, \IIIp\ and \RRRp.  

And finally, we reach
\[
\mypair{\begin{pmatrix}
d_1 + b_{11}\\  
d_2 + b_{21} \\ \vdots \\ d_n + b_{n1}
\end{pmatrix}
}{
\begin{pmatrix}
b_{11} &b_{12}&b_{13}&\dots&b_{1\n}\\
 b_{21}  &b_{22} &b_{23} &\dots&b_{2\n}\\
 \vdots&\vdots&\vdots&&\vdots\\
 b_{n1}  &b_{n2}&b_{n3}&\dots&b_{n\n} 
\end{pmatrix}}
\]
by an \OOO\ move in reverse.  
\end{proof}

\begin{propo}\label{colplus}
Given matrices $\DBpair$ describing the graph $E$ in augmented canonical form. When \vt{i} and \vt{j} are different regular vertices so that there is a path from \vt{i} to \vt{j}, then we can go from $E$ to  the graph described by the pair $\DBpairprime$ given by
\[
\DD'=\DD=\begin{pmatrix}d_1\\d_2\\d_3\\\vdots\\d_n\end{pmatrix}\quad \BB'=\begin{pmatrix}\cdots&b_{1,j-1}&b_{1j}+b_{1i}&b_{1,j+1}&\cdots\\
\cdots&b_{2,j-1}&b_{2j}+b_{2i}&b_{2,j+1}&\cdots\\
&\vdots&\vdots&\vdots&&\\\cdots&b_{n,j-1}&b_{nj}+b_{ni}&b_{n,j+1}&\cdots
\end{pmatrix}
\] 
by  moves of type \OOO, \IIIp, and \RRRp.
\end{propo}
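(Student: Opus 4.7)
My strategy parallels that of Proposition \ref{rowplus}, with rows replaced by columns, Proposition \ref{row} replaced by Proposition \ref{col}, and Theorem \ref{addcolalways} playing the role of providing unrestricted adjustments of $\DD$. First I would reduce to $i=1$ by relabeling, and by property (II) of augmented canonical form realise the given path from $\vt{1}$ to $\vt{j}$ as a concrete chain of edges through intermediate vertices $\vt{1} = \vt{i_0}, \vt{i_1}, \ldots, \vt{i_m} = \vt{j}$, so that the graph-theoretic prerequisites of Proposition \ref{col} are available at each consecutive pair along the path.

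The core of the argument is to iterate Proposition \ref{col} on these consecutive pairs, each step adding one column into its neighbour along the path. Proposition \ref{col} demands the antenna inequalities $d_k \geq b_{k1}+1$ for $k>1$ and $d_1 \geq b_{11}+2$ at the vertices it acts on; before each application I would use Theorem \ref{addcolalways} to inflate $\DD$ enough for these inequalities to hold, and afterwards use \ref{addcolalways} in reverse to deflate $\DD$ back. Crucially, \ref{addcolalways} imposes no condition beyond augmented canonical form, so these inflations and deflations can always be carried out, regardless of how the $b$-entries have shifted at that stage of the iteration.

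The endgame is the telescoping/symmetry trick from the proof of Proposition \ref{rowplus}: iterating the consecutive column additions on $\DBpair$ produces a common target pair $\DBpairprimeprime$ whose $j$-th column has absorbed the columns of every intermediate vertex $\vt{i_0}, \vt{i_1}, \ldots, \vt{i_{m-1}}$ along the path, and exactly the same $\DBpairprimeprime$ is reachable from the desired target $\DBpairprime$ by performing only the last $m-1$ of these additions (skipping the initial one). Since each of the moves \OOO, \IIIp, and \RRRp\ is reversible, this furnishes a sequence of legal moves realising $\DBpair \to \DBpairprimeprime \to \DBpairprime$, as required. Intermediate vertices of out-degree one, corresponding to the set $S$ in the proof of \ref{rowplus}, are absorbed by direct \RRRp\ moves exactly as in the row case.

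The hard part will be the bookkeeping around the shifting antenna conditions of Proposition \ref{col} as one marches along the path and the $b$-entries change at every step. This is precisely what Theorem \ref{addcolalways} was developed to address: its unconditional applicability in the augmented canonical form setting provides the flexibility needed to readjust $\DD$ between successive column additions, so the iterative argument goes through as cleanly as its row counterpart.
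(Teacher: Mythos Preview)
Your approach is workable in spirit but over-engineered, and it rests on a misreading of property (II). You invoke (II) to ``realise the given path \ldots\ as a concrete chain of edges through intermediate vertices,'' but (II) says precisely that whenever there is a path from $\vt{i}$ to $\vt{j}$ there is already a \emph{direct edge} from $\vt{i}$ to $\vt{j}$. The chain therefore has length one, and the whole iterative machinery you import from Proposition \ref{rowplus} collapses. The paper's proof exploits this: it applies Theorem \ref{addcolalways} four times (adding columns $i$ and $j$ twice each to $\DD$) so that the antenna inequalities of Proposition \ref{col} are met, applies Proposition \ref{col} \emph{once}, and then applies Theorem \ref{addcolalways} in reverse twice on the \emph{new} column $j$ (which now equals the old column $i$ plus the old column $j$) to restore $\DD$ exactly. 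No telescoping, no intermediate vertices, no set $S$.

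The reason Proposition \ref{rowplus} genuinely needed the path-iteration you are emulating is that it is proved \emph{before} augmented canonical form is in place --- indeed, it is the main tool used in Proposition \ref{getcanon} to arrange canonical form. Once canonical form holds, (II) makes that argument unnecessary. Your version also introduces complications you do not address: the intermediate vertices along a non-minimal path could be singular (infinite emitters), which would block Proposition \ref{col} since the column being added must correspond to a regular vertex; and the set-$S$ mechanism via \RRRp\ does not transplant cleanly from rows to columns, because the edge-direction prerequisite in Proposition \ref{col} is opposite to that in Proposition \ref{row}. None of this matters once you take $m=1$.
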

\begin{proof}
We may assume $i=1$ and $j=2$, and because the graph is in augmented canonical form, there is an edge from \vt 1 to \vt 2. To apply Proposition \ref{col}  we  use Theorem \ref{addcolalways} four times to pass to the vector
\[
\begin{pmatrix}d_1+2(b_{11}+b_{12})\\d_2+2(b_{21}+b_{22})\\d_3+2(b_{31}+b_{32})\\\vdots\\d_n+2(b_{n1}+b_{n2})\end{pmatrix}\
\]
so that the conditions are met to get to
\[
\mypair{\begin{pmatrix}d_1+2(b_{11}+b_{12})\\d_2+2(b_{21}+b_{22})\\d_3+2(b_{31}+b_{32})\\\vdots\\d_n+2(b_{n1}+b_{n2})\end{pmatrix}}{\begin{pmatrix}b_{11}&b_{12}+b_{11}&b_{13}&\dots&b_{1\n}\\
b_{21}&b_{22}+b_{21}&b_{23}&\dots&b_{2\n}\\
b_{31}&b_{32}+b_{31}&b_{33}&\dots&b_{3\n}\\\vdots&\vdots&\vdots&&\vdots\\b_{n1}&b_{n2}+b_{n1}&b_{n3}&\dots&b_{n\n}\end{pmatrix}}.
\]
We then apply Theorem \ref{addcolalways} in reverse two times to reach the conclusion in  Proposition \ref{col} irrespective of the original $d_i$. 
\end{proof}
\section{Conclusion}

\begin{defin}
We say that two graphs $E$ and $F$ are in \emph{augmented standard form} if both are in augmented canonical form, and if there is an isomorphism $\psi:\Gamma_E\to\Gamma_F$ so that 
\[
|\psi(\gamma(i))|^\bullet=|\gamma(i)|^\bullet \qquad
|\psi(\gamma(i))|^\circ=|\gamma(i)|^\circ 
\]
for all $\gamma(i)\in \Gamma_E$.
\end{defin}

We usually identify $\Gamma=\Gamma_E=\Gamma_F$ in this case.

\begin{lemma}\label{getstd}
When $C^*(E)\simeq C^*(F)$, then we can replace $E$ by $E'$ and $F$ by $F'$ by moves of type \OOO, \IIIp, and \RRRp, so that $E'$ and $F'$ are in augmented standard form.
\end{lemma}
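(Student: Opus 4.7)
The plan is to first normalize both graphs into augmented canonical form, then match up their component posets via the $C^*$-isomorphism, and finally adjust regular-vertex counts component by component without leaving the class of augmented canonical graphs.

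Step one: apply Proposition \ref{getcanon} separately to $E$ and $F$, obtaining $E_0$ and $F_0$ in augmented canonical form. Since every step in that algorithm is a move of type \OOO, \IIIp, or \RRRp, Theorem \ref{fromcompanion} gives $C^*(E_0)\simeq C^*(E)\simeq C^*(F)\simeq C^*(F_0)$, so it suffices to prove the lemma under the additional assumption that $E$ and $F$ are already in augmented canonical form.

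Step two: extract $\psi$ from the $C^*$-isomorphism. For a graph $G$ in augmented canonical form, the trichotomy displayed after Definition~\ref{def:CanonicalForm} shows that the components $\gamma\in \Gamma_G$ correspond bijectively to the simple subquotients in the gauge-invariant ideal lattice of $C^*(G)$, with the component pre-order matching the ideal containment order. The $*$-isomorphism $C^*(E)\simeq C^*(F)$ therefore induces an order-isomorphism $\psi\colon\Gamma_E\to \Gamma_F$, and for each $\gamma\in \Gamma_E$ the simple subquotient of $C^*(E)$ at $\gamma$ is isomorphic to that of $C^*(F)$ at $\psi(\gamma)$. Applying Lemma~\ref{Ktheory} to these subquotients reads off the $K_0$-group as $\cok \BB^\bullet_\gamma$; its free rank (which equals the number of singular vertices in $\gamma$ by the construction of $\BB$ and the trichotomy) must agree with that of $\psi(\gamma)$, yielding $|\gamma|^\circ=|\psi(\gamma)|^\circ$, and the minimum number of generators gives $\mc(\gamma)=\mc(\psi(\gamma))$.

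Step three: equalize the regular-vertex counts. Whenever $\gamma$ is a small component, condition (IV) forces $|\gamma|^\bullet\in\{0,1\}$, and Step two forces equality with $|\psi(\gamma)|^\bullet$. For large components, both sides satisfy $|\gamma|^\bullet\geq\max\{3,\mc(\gamma)+2\}$ but one may exceed the other. Imitating Step 5 in the proof of Proposition~\ref{getcanon}, a single \OOO\ move at a vertex of a large component (splitting one loop into one partition set and the remaining outgoing edges into another) increases $|\gamma|^\bullet$ by exactly one without changing $|\gamma|^\circ$ or any other component, and a subsequent application of improved row addition (Proposition~\ref{rowplus}) reinstates condition (II) inside the enlarged diagonal block. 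Iterating this operation on whichever of $E$ or $F$ currently has the smaller $|\gamma|^\bullet$, for each $\gamma\in\Gamma$ in turn, produces graphs $E'$ and $F'$ with $|\gamma|^\bullet=|\psi(\gamma)|^\bullet$ for every $\gamma$, still in augmented canonical form and hence in augmented standard form.

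The main obstacle is the $C^*$-algebraic content of Step two: translating the raw isomorphism $C^*(E)\simeq C^*(F)$ into the combinatorial data $\psi$ together with the invariants $|\gamma|^\circ$ and $\mc(\gamma)$. Once this translation is in hand, the geometric adjustments of Step three are routine and the lemma follows.
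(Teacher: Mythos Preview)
Your approach mirrors the paper's proof: pass to augmented canonical form via Proposition~\ref{getcanon}, read off an order isomorphism $\psi\colon\Gamma_E\to\Gamma_F$ from the ideal lattice, and then enlarge components by outsplitting (re-running the relevant steps of Proposition~\ref{getcanon}) until the regular-vertex counts match.

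One detail in Step two needs repair. The claim that the free rank of $\cok\BB^\bullet_\gamma$ equals $|\gamma|^\circ$ is not correct in general: for the small component with $\BB_\gamma=(0)$ the cokernel is $\ZZ$ (free rank $1$) while $|\gamma|^\circ=0$, and for large components nothing in conditions (I)--(IV) forces $\BB^\bullet_\gamma$ to have full column rank. What is true is that $\operatorname{rank}K_0-\operatorname{rank}K_1=|\gamma|^\circ$, since for an $(p+q)\times p$ matrix of rank $r$ one has $\operatorname{rank}(\cok)=p+q-r$ and $\operatorname{rank}(\ker)=p-r$; alternatively, and this is what the paper actually invokes, the simple subquotients at $\gamma$ and $\psi(\gamma)$ are themselves $*$-isomorphic, so their trichotomy type and singular-vertex count agree directly. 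With that correction your argument goes through.
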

\begin{proof}
Place $E$ and $F$ in augmented canonical form by Proposition \ref{getcanon}. Since $\Gamma_E$ and $\Gamma_F$ are reflected in the ideal structure of the $C^*$-algebras, the $*$-isomorphism implements an order isomorphism in a way that the corresponding components define gauge simple $C^*$-algebras that are mutually isomorphic. Thus the types in the trichotomy as well as the number of singular vertices are the same. Arguing as in \textsc{Step} 4 of Proposition \ref{getcanon} we may component-wise increase the number of regular vertices on either side to match them up, and run the algorithm to the end from there to reestablish augmented canonical form.
\end{proof}

\begin{defin}
Assume that $E$ and $F$ are in augmented standard form over $\Gamma$ and set $\nn=(|\gamma|)_{\gamma \in\Gamma}$ and $\mm=(|\gamma|^\bullet)_{\gamma \in\Gamma}$. We say that the graphs are $\GL$-equivalent if there exist invertible $U\in \MG(\nn)$ and $V\in \MG(\mm)$ so that
\[
U\BB_E^\bullet=\BB_F^\bullet V.
\]
If further all diagonal blocks in $U$ and $V$ can be chosen with determinant 1, we say that the graphs are $\SL$-equivalent.

If $U$ may be chosen so that $U\DD_E-\DD_F\in \cok \BB_F^\bullet$ we say that $E$ and $F$ are $\GLp$- or $\SLp$-equivalent.
\end{defin}

Let $\myop$ the matrix which differs from the identity matrix only by a one in entry $ij$.
We have established (in a way to be made precise in the ensuing proof) that the very special $\SLp$-equivalences implemented by $(U,V)=(\myop,\mathsf I)$ or $(U,V)=(\mathsf I,\myop)$ are given by moves. This we generalize as follows:

\begin{theor} \label{SLtomoves}
Let $E,F$ be graphs in augmented standard form that  are $\SLp$-equivalent. Then 
$E$ may be transformed to $F$ by moves of the type \OOO, \IIIp\ and \RRRp.
\end{theor}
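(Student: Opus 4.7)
The plan is to follow Franks' paradigm: given an $\SLp$-equivalence, factor the matrices $U\in\MG(\nn)$ and $V\in\MG(\mm)$ as products of elementary matrices of the form $\myop$ respecting the block structure, realize each such elementary factor as a sequence of legal moves using the propositions of the previous section, and finally adjust the antenna vector using Theorem \ref{addcolalways}.

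First I would extract from the $\SLp$-equivalence hypothesis invertible $U$ and $V$ with diagonal blocks of determinant $1$ such that $U\BB_E^\bullet=\BB_F^\bullet V$ and $U\DD_E-\DD_F\in\im\BB_F^\bullet$. Since both $U$ and $V$ lie in the block-triangular group associated with $\Gamma$, we may factor each into elementary matrices $\myop$ respecting that structure, so that $\gamma(i)\leq\gamma(j)$ for every factor used. The block-triangular $\mathrm{SL}$-factorization of integer matrices is carried out in detail in \cite{segrerapws:ccuggs}, and by Remark \ref{sameaserrs} its combinatorial hypotheses match the ones guaranteed by our augmented canonical form, so the argument there transfers with only minor bookkeeping changes.

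With the factorization in hand, each left multiplication by $\myop$ corresponds to an improved row addition, implemented by Proposition \ref{rowplus}: the required loop at \vt{i} and path from \vt{i} to \vt{j} are supplied by conditions (I), (II), and (IV) of augmented canonical form whenever $\gamma(i)\leq\gamma(j)$. Each right multiplication by $\myop$ on the regular columns is an improved column addition, implemented by Proposition \ref{colplus}, whose hypotheses are likewise fulfilled. The inverse elementary factors produce row and column subtractions, which are legal under augmented canonical form by Remark \ref{subtract}. Once all these moves are executed, the $\BB$-matrix will have been transformed into $\BB_F^\bullet$, but the resulting antenna vector will agree with $\DD_F$ only modulo $\im\BB_F^\bullet$; we close this residual gap by writing $U\DD_E-\DD_F$ as an integer combination of regular columns of $\BB_F^\bullet$ and applying Theorem \ref{addcolalways}, together with its reverse, to add or remove that combination to $\DD$.

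The hard part will be ensuring that every intermediate pair $\DBpair$ visited along the way is admissible as graph data (off-diagonal entries nonnegative, diagonal at least $-1$, antennae positive) and retains enough of augmented canonical form for the next move to be applicable. This is the usual nonnegativity obstruction, and it is exactly what the technical analysis in \cite{segrerapws:ccuggs} is designed to overcome; the strategy of interspersing elementary operations with canonicalization steps via Proposition \ref{getcanon} applies here with minor modifications, since the extra data carried by the antennae can be adjusted freely in either direction by Theorem \ref{addcolalways} and Remark \ref{subtract}, and so does not interfere with the positivity analysis on $\BB$ imported from the stabilized case.
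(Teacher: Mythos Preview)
Your overall strategy matches the paper's, but there is a genuine gap in how you handle row \emph{subtractions} on the antenna vector. When you subtract row $i$ from row $j$ in $\DBpair$, the resulting $d_j'=d_j-d_i$ must remain at least $1$; Remark~\ref{subtract} explicitly flags this as a condition to be checked, not something it guarantees. Your claim that the antennae ``can be adjusted freely in either direction by Theorem~\ref{addcolalways}'' is too strong: that theorem only allows adding (or, in reverse, removing) an entire regular column of $\BB$ to $\DD$. To raise $d_j-d_i$ before a subtraction you therefore need a regular vertex $\vt{\ell}$ with $b_{j\ell}>b_{i\ell}$, and a generic factorization of $U$ into elementary matrices gives no reason why such an $\ell$ should exist at each step. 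The paper's proof closes exactly this gap by invoking a specific property of the procedure in \cite[Section~9]{segrerapws:ccuggs}: that sequence never performs a row subtraction of row $i$ from row $j$ unless some regular $\ell$ with $b_{i\ell}<b_{j\ell}$ is present, so Theorem~\ref{addcolalways} can always be used to arrange $d_j>d_i$ first. It then tracks the accumulated antenna adjustments via the bookkeeping identity $\DD^{(k)}=U^{(k)}\DD^{(0)}+\BB^{(k)}\mathbf{x}^{(k)}$, which is what makes the final $\cok\BB_F^\bullet$ argument go through.

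A secondary issue: your suggestion to ``intersperse elementary operations with canonicalization steps via Proposition~\ref{getcanon}'' is dangerous, since that algorithm performs outsplits and \RRRp\ moves that can change the size of $\BB$ and hence invalidate the remainder of your precomputed factorization of $U$ and $V$. The paper avoids this by relying on the fact that the sequence from \cite{segrerapws:ccuggs} already keeps every intermediate $\BB^{(k)}$ in the required form, so no re-canonicalization is needed mid-stream.
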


\begin{proof}
Our aim is to go from $(\DD_E,\BB_E)$ to $(\DD_F,\BB_F)$ by row and column operations, visiting graphs specified by $(\DD^{(k)},\BB^{(k)})$ in augmented canonical form along the way, with $(\DD^{(0)},\BB^{(0)})=(\DD_E,\BB_E)$ and  reaching $(\DD_F,\BB_F)$ at the end. This requires in particular that 
\[
d^{(k)}_i\geq 1\qquad b^{(k)}_{ij}\geq 0 \qquad b^{(k)}_{ii}\geq -1
\]
everywhere. 

That this is possible for the $\BB$ matrices is exactly proved in \cite[Theorem 9.10]{segrerapws:ccuggs}. More precisely, since the graphs represented by $\BB_E$ and $\BB_F$ are in standard form and $\SL$-equivalent, a sequence of ``legal'' row and column operations are specified to obtain $\BB^{(k+1)}$ from $\BB^{(k)}$, going from $\BB^{(0)}=\BB_E$ to $\BB^{(M)}=\BB_F$. It is always legal to add row $i$ to row $j$ when $\gamma(i)\geq \gamma(j)$, but to perform the corresponding row subtraction we further need to ensure that $\BB^{(k+1)}$  remains in augmented standard form with $\BB^{(k)}$ (roughly speaking by not taking too much away). The same applies to column operations, but these are further restricted to the realm of regular vertices.

\begin{figure}
\begin{center}
\begin{tabular}{r||c|c}
\cite{segrerapws:ccuggs}&Add&Subtract\\\hline\hline
Row&$\myop\BB$&
$\myop^{-1}\BB$\\\hline
Column&$\BB\myop$&$\BB\myop^{-1}$\end{tabular}
\qquad 
\begin{tabular}{r||c|c}
\emph{Ibid.}&Add&Subtract\\\hline\hline
Row&$(\myop\DD,\myop\BB)$&
$(\myop^{-1}(\BB\mathbf z+\DD),\myop^{-1}\BB)$\\\hline
Column&$(\DD,\BB\myop)$&$(\DD,\BB\myop^{-1})$\end{tabular}
\end{center}
\caption{Left: Legal  operations in \cite{segrerapws:ccuggs}. Right: Legal operations here.}\label{myops}
\end{figure}

With $\myop$ the matrix which differs by the identity matrix only by a one in entry $ij$, it is clear that the operations described are given as in the table to the left of Figure \ref{myops}. Note also that the legality conditions imply that $\myop\in \MG(\nn)$ throughout. We use the dangerous convention that $\infty-\infty=\infty$ in row subtractions, cf.\ Remark \ref{subtract}.

The operation matrices implement $U$ and $V$ in the sense that the product of matrices acting from the left is $U$ and the product of the matrices acting from the right becomes $V$ after  deletion of row and columns corresponding to singular vertices. We denote by $U^{(k)}$ the matrix obtained by multiplying all operator matrices applied from the left to reach step $k$, with $U^{(k+1)}=U^{(k)}$ whenever the operation is performed on the right. 
 
 It follows directly from Propositions \ref{rowplus} and \ref{colplus} that we can extend three of these operations to pairs representing graphs in augmented canonical form, but row subtraction requires care, since we may only meaningfully subtract row $i$ from row $j$ when $d_j>d_i$. However, as a consequence of the fact that  every matrix $\BB^{(k)}$ is in augmented standard form with its predecessor, one may check that the procedure given in \cite[Section 9]{segrerapws:ccuggs} never makes a row subtraction of row $i$ from row $j$ unless there is a regular $\vt \ell$ so that $b_{i\ell}<b_{j\ell}$. Hence we may use Theorem \ref{addcolalways} to pass to a pair where $d_i<d_j$ before effectuating the operation. This is indicated to the right of  Figure \ref{myops}. Here $\mathbf z$ is a multiple of the basis vector $\mathbf e_\ell$; in particular it vanishes on all $i$ corresponding to singular \vt i.

We define $(\DD^{(k)},\BB^{(k)})$ by these operations, and claim that 
\begin{equation}\label{kicker}
\DD^{(k)}=U^{(k)} \DD^{(0)}+\BB^{(k)}\mathbf x^{(k)}
\end{equation}
with $\mathbf x^{(k)}$ a vector which vanishes on all singular entries. Indeed for row subtractions we have
\[
\DD^{(k+1)}=\myop^{-1}(\BB^{(k)}\mathbf z+\DD^{(k)})=\BB^{(k+1)}\mathbf z+\myop^{-1}U^{(k)} \DD^{(0)}+\myop^{-1}\BB^{(k)}\mathbf x^{(k)}=
U^{(k+1)} \DD^{(0)}+\BB^{(k+1)}\mathbf x^{(k+1)}
\]
with $ \mathbf x^{(k+1)}=\mathbf z+\mathbf x^{(k)}$, and the same with $\mathbf z=\OO$ for row additions. For column operations we have $\DD^{(k+1)}=\DD^{(k)}$, but we must set $\mathbf x^{(k+1)}$ to either $\myop^{-1} \mathbf x^{(k)}$ or $\myop \mathbf x^{(k)}$ as appropriate.

We now know  that we can go from $(\DD_E,\BB_E)=(\DD^{(0)},\BB^{(0)})$ to the pair $(\DD^{(M)},\BB^{(M)})$ using moves \OOO,\IIIp, and \RRRp, and we know that $\BB^{(M)}=\BB_F$ as desired. We have by \eqref{kicker} that $\DD^{(M)}$ and $U\DD^{(0)}$ define the same element in $\cok \BB^{(M)}$, and by  our assumption we know this is also the same element as the one defined by $\DD_F$. In other words, we can write
\[
\DD^{(M)}-\DD_F=\BB_F\mathbf y
\]
with $y_i=0$ for singular $i$. Redistributing according to signs we get
\[
\DD^{(M)}+\sum_{i=1}^\n{y_i'}(\BB_F)_i=\DD_F+\sum_{i=1}^\n{y_i''}(\BB_F)_i=\DD''
\]
with all $y_i',y_i''\geq 0$, so we may apply Theorem \ref{addcolalways} to take both $(\DD^{(M)},\BB_F)$ and $(\DD_F,\BB_F)$ to $(\DD'',\BB_F)$. 

Finally, we note that after reorganizing the vertices in each component so that the singuar vertices are listed last, the columns in $\BB_E$ and $\BB_F$ corresponding to singular vertices are in fact identical, since they are completely determined by the information in $\Gamma$ because of (III) in the definition of augmented canonical form. These columns will not be affected by the moves we   performed, and hence this part of the matrices require no further attention. The proof is complete.

\end{proof}

\begin{corol}\label{AER-cor:main-5}
Let $E$ and $F$ be graphs with finitely many vertices.  
Then the following are equivalent
\begin{enumerate}[(1)]
\item \label{AER-cor:main-5-item-1}  $E$ can be obtained by $F$ by moves of the type \OOO,\IIIp,\RRRp,\CCCp,\PPPp,
\item\label{AER-cor:main-5-item-2} 
$C^{*} (E )\cong C^{*} (F)$, and,
\item\label{AER-cor:main-5-item-3} 
The filtered, ordered, pointed  $K$-theories of $C^*(E)$ and $C^*(F)$ are isomorphic.
\end{enumerate}
When $E$ and $F$ are in augmented standard form, they are also equivalent to
\begin{enumerate}[(1)]\addtocounter{enumi}{3}
\item \label{AER-cor:main-5-item-4}  $E$ is $\GLp$-equivalent to $F$.
\end{enumerate}
\end{corol}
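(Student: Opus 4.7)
The plan is to prove the equivalences by establishing the cycle $(1)\Rightarrow(2)\Rightarrow(3)\Rightarrow(4)\Rightarrow(1)$, where $(4)$ is formulated after using Lemma \ref{getstd} to pass to augmented standard form without changing the $C^*$-algebra. The implication $(1)\Rightarrow(2)$ is immediate from Theorem \ref{fromcompanion}, and $(2)\Rightarrow(3)$ is tautological since a $*$-isomorphism transports the whole filtered, ordered, pointed $K$-theory.

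For $(3)\Rightarrow(4)$, the first step is to apply Lemma \ref{getstd} to replace $E$ and $F$ by graphs $E'$ and $F'$ in augmented standard form; since the moves involved preserve $C^*(E)$ and $C^*(F)$, the filtered ordered pointed $K$-theories of $E'$ and $F'$ remain isomorphic. Then one invokes the machinery of \cite{segrerapws:ccuggs}, where the filtered ordered $K$-theory is translated into $\GL$-equivalence of the $\BB$-matrices in augmented canonical form. The additional pointed information is captured exactly by the $\DD$-vector: by Lemma \ref{Ktheory} the distinguished class $[1_{C^*(E)}]$ is represented by $\DD+\operatorname{im} \BB^\bullet$, so the isomorphism of pointed $K$-theories upgrades $\GL$-equivalence to $\GLp$-equivalence, yielding $(4)$.

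The heart of the argument is $(4)\Rightarrow(1)$. Given a $\GLp$-equivalence $(U,V)$ between graphs in augmented standard form, the diagonal blocks of $U$ and $V$ have determinants $\pm 1$. The plan is to use the $\CCCp$ and $\PPPp$ moves to realize diagonal sign-flipping matrices component by component, exactly paralleling the strategy of \cite{segrerapws:ccuggs} in the stable setting but with the plus-versions carefully chosen to preserve $C^*(E)$ rather than only its stabilization; the \PPPp\ move contributes a sign in a cyclic component and \CCCp\ is used to keep the resulting graphs in augmented standard form. After finitely many such moves, we reduce to an $\SLp$-equivalence, and Theorem \ref{SLtomoves} then delivers the required sequence of \OOO, \IIIp, \RRRp\ moves transforming one graph into the other. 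The implication $(1)\Rightarrow(4)$ (needed to close the cycle when both graphs are in augmented standard form) follows from what has already been proved: $(1)$ yields $(3)$, and then $(3)\Rightarrow(4)$ applied to graphs already in standard form does not require any preparation.

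The main obstacle is the sign-reduction step inside $(4)\Rightarrow(1)$, i.e.\ verifying that the \CCCp\ and \PPPp\ moves produce diagonal-block sign flips that are compatible with the antenna calculus and interact with the $\DD$-vector in the controlled way prescribed by the $\GLp$-definition ($U\DD_E-\DD_F\in\operatorname{im}\BB_F^\bullet$). This is the essentially new ingredient beyond \cite{segrerapws:ccuggs}, but the companion analysis \cite{seer:rmsigc} together with our careful accounting of how $\DD$ transforms under row and column operations (compare equation \eqref{kicker} in the proof of Theorem \ref{SLtomoves}) makes the bookkeeping routine once the correct local pictures for \CCCp\ and \PPPp\ are in place.
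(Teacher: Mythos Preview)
Your plan is correct and follows essentially the same route as the paper: $(1)\Rightarrow(2)$ via Theorem~\ref{fromcompanion}, reduction to augmented standard form via Lemma~\ref{getstd}, passage to $\GLp$-equivalence via the machinery of \cite{segrerapws:ccuggs}, sign-reduction to $\SLp$ using the plus-moves, and then Theorem~\ref{SLtomoves}.

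Two small corrections to your description of the sign-reduction step. First, your assignment of roles to \CCCp\ and \PPPp\ is off: both are sign-flipping moves (each changes the determinant of a diagonal block in a different configuration), and neither is there to ``keep the graphs in augmented standard form''---that is handled separately by rerunning the algorithm of Proposition~\ref{getcanon} from \textsc{Step}~4 after each modification. Second, the procedure adapted from \cite[Sections~11--12]{segrerapws:ccuggs} also uses simple expansions, i.e.\ \RRRp\ in reverse, and here one must first arrange an antenna to delete via Theorem~\ref{addcolalways}; this is a genuine extra step in the augmented setting that your outline omits. The bookkeeping for the $\DD$-vector through these moves is not supplied by \cite{seer:rmsigc} (which only certifies that the moves preserve the $C^*$-algebra) but must be checked directly, as you correctly note via the analogue of \eqref{kicker}.
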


\begin{proof}
We proved $(2)\Longleftrightarrow (3)$ in \cite{segrerapws:ccuggs}, and $(1)\Longrightarrow (2)$ was proved in \cite{seer:rmsigc} as noted in Theorem \ref{fromcompanion}. 

Assuming (2), we note by Lemma \ref{getstd} that we may pass without loss of generality to the case when $E$ and $F$ are in augmented standard form. It is proved in \cite[Theorem 14.6]{segrerapws:ccuggs} that (4) then holds. Appealing further to  \cite[Section 11-12]{segrerapws:ccuggs}, we may change the graphs by moves to arrive at two graphs that are $\SLp$-equivalent. Indeed, in these two sections a pair of graphs in standard form are revised by a finite number of changes of the form
\begin{itemize}
\item The move \CCC,
\item The move \PPP,
\item Simple expansions by move \RRR\ in reverse.
\end{itemize} 
to arrange for the orginal pair $(U,V)$ giving a $\GL$-equivalence to be replaced by one with determinants 1 by an inductive procedure. The procedure also involves
rearranging for standard form by a number of row operations after each step.

Starting with a pair of graphs in augmented standard form being $\GLp$-equivalent, we do the same, but use \CCCp, \PPPp, and \RRRp\ instead to obtain a pair that is $\SLp$-equivalent. To do the \RRRp\ move in reverse we have to have an antenna to delete, but since these changes are only applied to vertices $i$ for which $\gamma(i)$ satisfies (IV) of augmented canonical form, this is easily arranged by Theorem \ref{addcolalways}. Applying the algorithm in Proposition \ref{getcanon} from \textsc{Step} 4 onwards reestablishes augmented standard form without changing $\SLp$-equivalence.

The argument is completed by Theorem \ref{SLtomoves}.
\end{proof}

\begin{remar}
We are deliberately vague about the $K$-theoretical invariant from (2) above -- see \cite{segrerapws:ccuggs} for details. As explained there, all conditions are decidable because of \cite{bs:decide}.
\end{remar}

\end{document}